 \title[Remarks on polarized endomorphisms]{Remarks on log Calabi--Yau  structure of varieties admitting polarized endomorphisms}
\author{Ama\"el Broustet}
\address{Universit{\'e} Lille 1, UMR CNRS 8524,
UFR de math{\'e}matiques, 59 655 Villeneuve d'Ascq CEDEX.}
\email{amael.broustet@math.univ-lille1.fr}
\author{Yoshinori  Gongyo}
\address{Graduate School of Mathematical Sciences, 
the University of Tokyo, 3-8-1 Komaba, Meguro-ku, Tokyo 153-8914, Japan.}
\email{gongyo@ms.u-tokyo.ac.jp}
\date{\today}
\subjclass[2010]{32H50, 20K30, 08A35, 14E30}
\keywords{Endomorphism, Calabi--Yau variety}
\newcommand{\Pic}[0]{{\operatorname{Pic}}}
\newtheorem{thm}{Theorem}[section]
\newtheorem{prop}[thm]{Proposition}
\newtheorem{lem}[thm]{Lemma}
\newtheorem{rem}[thm]{Remark}
\newtheorem{cor}[thm]{Corollary}
\newtheorem{conj}[thm]{Conjecture}
\newtheorem{ques}[thm]{Question}
\theoremstyle{definition}
\newtheorem{defi}[thm]{Definition}
\newtheorem{case}{Case}
\newtheorem*{ack}{Acknowledgments}
\begin{document}
\bibliographystyle{amsalpha+}
 
 \maketitle
 
 \begin{abstract}We discuss the Calabi--Yau type structure of normal projective surfaces  and Mori dream spaces admitting non-trivial polarized endomorphisms. 
 
 \end{abstract}
 
 \section{Introduction} Through this article, we work over the
 complex number field $\mathbb{C}$ except where otherwise stated. Recently  Yuchen Zhang studied the following question in his paper \cite{Yuchen}:
 \begin{ques}\label{yz-ask}Let $(X,0)$ be a normal isolated singularity. If $\phi : (X,0) \to (X,0)$ is a noninvertible finite endomorphism, does there exist a boundary $\Delta$ such that $(X, \Delta)$ is log canonical?
  \end{ques}

 The case of surfaces was studied by Wahl \cite{wahl} and Favre
 \cite{favre}. Defining the volume of a normal surface singularity,
 Wahl was able to give a positive answer to the question in the
 surface case. This invariant was generalized by Boucksom, de Fernex and Favre \cite{BdFF}
to isolated singularities of higher dimensional varieties. As a
consequence, they
obtained a positive answer to the question in the
$\mathbb{Q}$-Gorenstein case. Unfortunately, as shown by  Yuchen Zhang
in \cite{zhang2014}, their generalization cannot be used to treat the
non-$\mathbb{Q}$-Gorenstein case. Introducing a new volume refining
the one introduced in \cite{BdFF}, it is shown in
\cite{Yuchen} that  Question \ref{yz-ask} holds in the
non-$\mathbb{Q}$-Gorenstein case for \'etale in codimension one endomorphisms. 


 In this article we study the following conjecture, which is a global version of Question \ref{yz-ask}:
 
 \begin{conj}\label{cy conj}Let $X$ be a normal projective varieties
   admitting a non-trivial polarized endomorphism. Then $X$ is of Calabi--Yau type.

\end{conj}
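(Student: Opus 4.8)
In full generality the conjecture appears out of reach, so the plan is to establish it for normal projective surfaces and for Mori dream spaces, following a common strategy: produce the boundary $\Delta$ from the ramification of the iterates of the endomorphism, and run the minimal model program on $X$. Write $f\colon X\to X$ for the given endomorphism and fix an ample divisor $H$ with $f^{*}H\sim qH$ for an integer $q\ge 2$; non-triviality means exactly $q\ge 2$. I would first reduce to the case where $X$ is $\mathbb{Q}$-Gorenstein, indeed log canonical. For surfaces this is the local structure theory of Wahl \cite{wahl} and Favre \cite{favre}: a normal surface singularity admitting a non-invertible finite endomorphism is a quotient, cusp, or simple elliptic singularity, all of which are log canonical. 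In general this reduction is essentially the content of Question \ref{yz-ask} and cannot be expected for free. Granting it, Riemann--Hurwitz gives $K_{X}\sim_{\mathbb{Q}}f^{*}K_{X}+R_{f}$ with $R_{f}\ge 0$ the ramification divisor, hence $K_{X}\sim_{\mathbb{Q}}(f^{n})^{*}K_{X}+R_{f^{n}}$ with $R_{f^{n}}=\sum_{i=0}^{n-1}(f^{i})^{*}R_{f}\ge 0$; using that $\tfrac1q f^{*}$ acts on $N^{1}(X)$ with spectral radius $1$ attained at the nef class $H$, one shows $-K_{X}$ is nef and, in favourable situations, $\mathbb{Q}$-linearly equivalent to an effective divisor.

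Next I would run a $K_{X}$-MMP. Polarized endomorphisms descend along every step of such a program (Nakayama--Zhang), so after finitely many steps one arrives either at a model $X'$ with $K_{X'}\equiv 0$ or at a Mori fibre space $\varphi\colon X'\to Z$ with $\dim Z<\dim X'$, still carrying a non-trivial polarized endomorphism. In the first case abundance for numerical dimension zero gives $mK_{X'}\sim 0$, so $(X',0)$ is Calabi--Yau, and one transfers this structure back to $X$ along the birational map (a point that itself needs care, since Calabi--Yau type is not preserved under arbitrary birational contractions). In the second case, if $Z$ is a point then $X'$ is a log terminal Fano, hence of Fano type, hence of Calabi--Yau type, as a general $\tfrac1m$-multiple of a member of $|-mK_{X'}|$ is a suitable boundary by Bertini; if $\dim Z>0$, the endomorphism of $X'$ induces a polarized one on $Z$, so by induction $Z$ is of Calabi--Yau type, and it remains to glue the relatively Fano fibres to the Calabi--Yau base.

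For the two cases settled in the paper the argument specializes. For normal projective surfaces one couples the dichotomy above with the classification of surfaces carrying a non-trivial polarized endomorphism (toric surfaces, abelian and bielliptic surfaces, certain ruled surfaces) and checks Calabi--Yau type on each class: toric and del Pezzo surfaces are of Fano type, the abelian and bielliptic ones have torsion canonical class and are Calabi--Yau with $\Delta=0$. For a Mori dream space $X$, I would instead use the characterization of Fano type varieties via Cox rings (Gongyo--Okawa--Sannai--Takagi): $X$ is of Fano type if and only if $\Spec\Cox(X)$ has log terminal singularities. The endomorphism of $X$ lifts to a finite endomorphism of $\Spec\Cox(X)$ fixing the vertex; feeding this into the local methods around Question \ref{yz-ask} — e.g.\ \cite{Yuchen} when the lift is \'etale in codimension one — should yield the required log terminality, whence $X$ is of Fano type and a fortiori of Calabi--Yau type.

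The principal obstacle is precisely the one already present in Question \ref{yz-ask}: controlling the singularities. Everything above runs smoothly once $X$ is $\mathbb{Q}$-Gorenstein, but the non-$\mathbb{Q}$-Gorenstein case demands the global analogue of Yuchen Zhang's local theorem and remains open; in the Cox ring approach the analogous difficulty is that the singularities of $\Spec\Cox(X)$ need not be isolated. Even in the $\mathbb{Q}$-Gorenstein case the gluing step for Mori fibre spaces — producing an effective $\Delta_{X'}$ with $K_{X'}+\Delta_{X'}\equiv 0$ from a Calabi--Yau structure downstairs and relatively Fano fibres — is delicate, because a relatively anticanonical class need not be $\mathbb{Q}$-effective; here one expects to invoke the endomorphism to rigidify the fibration, which is where the structure theory re-enters.
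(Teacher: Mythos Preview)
Your plan correctly targets the two tractable cases, but both arguments diverge from the paper's and contain real gaps.

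For Mori dream spaces, the paper does not go through the Cox ring. It first uses rational polyhedrality of the nef cone to show that some iterate $f^{m*}$ acts as a scalar $\alpha$ on $\NS_{\mathbb{R}}(X)$ (Lemma~\ref{projective triviality}); Riemann--Hurwitz then gives $-K_X\sim_{\mathbb{Q}}\tfrac{1}{\alpha-1}R\ge 0$, so $\kappa(-K_X)\ge 0$ --- \emph{not} nef, contrary to your spectral-radius claim. One then runs the $(-K_X)$-MMP, not the $K_X$-MMP, to an anti-minimal model $Y$; the scalar action makes every step $f$-equivariant (Lemmas~\ref{induced mor-1} and \ref{induced mor-2}), so $Y$ inherits a polarized endomorphism, is lc by \cite{bh}, and has $-K_Y$ semi-ample. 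The transfer back to $X$ is then \cite[Theorem~1.5]{gost}. Your Cox ring route is problematic beyond the non-isolated singularities you already flag: the local results of \cite{Yuchen} yield log \emph{canonical}, not log terminal, singularities, so you would not conclude Fano type; and Fano type is in any case too strong a conclusion for an arbitrary MDS with a polarized endomorphism.

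For surfaces, the Fujimoto--Nakayama classification you invoke covers only the \emph{smooth} case, and the paper explicitly notes that endomorphisms need not lift to the minimal resolution, so singular surfaces cannot be reduced to it. Instead, after obtaining lc singularities from \cite{wahl} and \cite{bh}, the paper uses Zhang's lemma \cite[Theorem~2.7]{Zhang-uniruled} to arrange $f^{*}=q\cdot\mathrm{Id}$ on $\NS_{\mathbb{Q}}(S)$ and then splits according to $\kappa(-K_S)$ and rationality. If $-K_S$ is big, $S$ is an MDS by \cite{tvav} and the first case applies; if $\kappa(-K_S)\le 1$, one takes the Zariski decomposition $-K_S=P+N$, shows the components of $N$ are totally invariant so that $(S,N)$ is lc via \cite[Corollary~3.3]{bh}, and uses semi-ampleness of $P$. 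The remaining (non-rational, or rational with non-rational singularities) cases are handled by running the $K_S$-MMP to a Mori fibre space over a curve and applying a canonical bundle formula over an elliptic base, or by recognising a cone over an elliptic curve when the base is a point. Note finally that descent of the polarized endomorphism along the MMP is not free from \cite{nakayama-zhang10}; in both settings the paper first secures the scalar action on $\NS$ so that every contraction is $f$-equivariant.
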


Compared with the study of Question \ref{yz-ask}, the study of
Conjecture \ref{cy conj} is closer to the  classification theory. Indeed Fujimoto and
Nakayama classify smooth compact complex surfaces admitting
non-trivial endomorphisms (which are not necessary polarized)
\cite{fn1}. For singular surfaces, however, the classification is more
complicated and not complete. 
For higher dimension, Nakayama and Zhang study
the global structure of varieties admitting non-trivial polarized
endomorphisms from the viewpoint of the maximal rationally connected
fibrations \cite{nakayama-zhang10}. See also the recent paper
\cite{mz1}. And Zhang studies such a variety which is uniruled in more
detail in \cite{Zhang-uniruled}. For a more arithmetic point of view, see \cite{swzhang}. For positive characteristics, see \cite{nakayama-posi}. For higher dimensional varieties, see \cite{f1}, \cite{fs1}, \cite{nakayama-zhang10}, \cite{Zhang-uniruled}, and \cite{mz1}.

In this article, we obtain the following partial result toward Conjecture \ref{cy conj}:

\begin{thm}[Theorems \ref{cy mds} and \ref{thm:surface}
  ]\label{main-rem}Let $X$ be a Mori dream space or a normal
  projective  surface. If there exists a non-trivial polarized
  endomorphism on $X$, then there is an effective $\mathbb{Q}$-divisor
  $\Delta$ such that $K_X+\Delta \sim_{\mathbb{Q}}0$ and $(X, \Delta)$
  is log canonical, i.e. $X$ is of Calabi--Yau type (see Definition \ref{Fano pair}).
\end{thm}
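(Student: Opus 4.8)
The argument common to both settings rests on the ramification formula for the polarized endomorphism $f\colon X\to X$. Writing $R_f\ge 0$ for its ramification divisor one has $K_X\sim f^*K_X+R_f$, hence, iterating, $K_X\sim (f^m)^*K_X+R_{f^m}$ with $R_{f^m}=\sum_{i=0}^{m-1}(f^i)^*R_f\ge 0$; moreover $f^*H\equiv qH$ for some ample $H$ and some integer $q\ge 2$, and comparing top self-intersections gives $\deg f=q^{\dim X}$. I will use the known fact (see \cite{nakayama-zhang10}) that every eigenvalue of $f^*$ on $N^1(X)_{\mathbb R}$ has modulus $q$, so that $(f^*)^{-1}=q^{-\dim X}f_*$ has spectral radius $q^{-1}<1$ and $(f^*)^{-m}[-K_X]\to 0$. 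The first step is to show that $-K_X$ is pseudoeffective: applying $(f^*)^{-1}$ to the ramification formula and iterating gives, in $N^1(X)_{\mathbb R}$, the identity $[-K_X]=(f^*)^{-m}[-K_X]+\sum_{k=1}^{m}q^{-k\dim X}[(f^k)_*R_f]$; each summand lies in the closed cone $\overline{\mathrm{Eff}}(X)$ since $(f^k)_*R_f$ is an honest effective divisor, while $(f^*)^{-m}[-K_X]\to 0$, so the partial sums converge to $[-K_X]$ and $-K_X\in\overline{\mathrm{Eff}}(X)$.

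For a Mori dream space $X$ the cone $\overline{\mathrm{Eff}}(X)$ is rational polyhedral and every pseudoeffective $\mathbb Q$-divisor class is effective, so $-K_X\sim_{\mathbb Q}\Delta_0$ for some effective $\mathbb Q$-divisor $\Delta_0$. The remaining, and main, task is to replace $\Delta_0$ by an effective $\Delta\sim_{\mathbb Q}-K_X$ for which $(X,\Delta)$ is log canonical. Here the plan is to exploit the self-similarity of the pair under $f$: arranging the branch divisor of $f$ to occur in $\Delta_0$ with suitable coefficients, the divisor $f^*\Delta_0-R_f$ is again effective and still $\sim_{\mathbb Q}-K_X$, so $f$ operates on a nonempty $f$-stable family of log Calabi--Yau boundaries; were no member log canonical, the discrepancy of a worst non-log-canonical valuation over $X$ would be forced to decrease strictly under iterated pullback by $f$, contradicting that it is bounded below. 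Equivalently, one may lift $f$ --- after replacing it by an iterate --- to a finite non-invertible endomorphism of $\widehat X=\Spec\Cox(X)$ fixing the vertex, reducing the problem to producing a log canonical boundary with trivial log canonical class on $\widehat X$, and then appeal to the affine $\mathbb Q$-Gorenstein instances of Question~\ref{yz-ask} (\cite{BdFF}, \cite{Yuchen}). This yields Theorem~\ref{cy mds}.

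For a normal projective surface $X$, I would split according to whether $K_X$ is pseudoeffective. If it is, then so is $-K_X$ by the above, and a Zariski decomposition together with the Hodge index theorem forces $K_X\equiv 0$; the normal surfaces with numerically trivial canonical divisor carrying a polarized endomorphism form a short list (essentially quotients of abelian surfaces and their log-Enriques analogues), all log canonical with $K_X$ torsion, so $\Delta=0$ works. If $K_X$ is not pseudoeffective, then $X$ is uniruled, and the structure theory of surfaces admitting polarized endomorphisms (\cite{fn1}, \cite{nakayama-zhang10}, \cite{Zhang-uniruled}, with \cite{wahl} and \cite{favre} at the singular points) shows that $X$ is, up to the known cases, a toric surface, a $\mathbb P^1$- or quasi-elliptic-bundle type surface over an elliptic curve, or a cone over such, and in each of these an explicit boundary of the required kind is at hand --- the reduced sum of the torus-invariant prime divisors in the toric case, or two disjoint sections plus a reduced divisor pulled back from the base in the bundle case. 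This gives Theorem~\ref{thm:surface}.

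The main obstacle is not the pseudoeffectivity of $-K_X$, which is soft and follows formally, but the passage from an effective anticanonical class to a genuine log canonical log Calabi--Yau structure. This forces one to control the singularities of $X$ (and, in the Mori dream case, of its Cox ring), and it is exactly at this point that the self-similarity coming from the polarized endomorphism has to be used in an essential way, in parallel with the known resolutions of the local Question~\ref{yz-ask}.
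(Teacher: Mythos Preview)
Your overall strategy---establish pseudoeffectivity of $-K_X$ from the ramification formula, then upgrade an effective anticanonical representative to a log canonical one---is reasonable, but both halves of the argument have genuine gaps at the upgrading step.

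\textbf{Mori dream space case.} Your two proposed mechanisms for producing a log canonical boundary are not justified. First, the claim that $f^*\Delta_0-R_f$ is effective is not automatic: if a prime divisor $P'$ ramifies to order $e$ over $P$ and $P$ appears in $\Delta_0$ with coefficient $c$, then $P'$ appears in $f^*\Delta_0-R_f$ with coefficient $ce-(e-1)$, which is negative whenever $c<1-1/e$; nothing in your setup forces $\Delta_0$ to contain the branch locus with large enough coefficients, so you do not obtain an $f$-stable family of effective boundaries. Second, the Cox ring reduction to Question~\ref{yz-ask} via \cite{BdFF} or \cite{Yuchen} needs the vertex of $\widehat X=\Spec\Cox(X)$ to be an \emph{isolated} singularity (for \cite{BdFF}) or the lifted endomorphism to be \'etale in codimension one (for \cite{Yuchen}); the former fails once $X$ is singular, and the latter is an extra hypothesis you have not arranged. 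The paper avoids both issues by running a $(-K_X)$-MMP, which terminates on a Mori dream space, and showing (Lemmas~\ref{projective triviality}, \ref{induced mor-1}, \ref{induced mor-2}) that the polarized endomorphism descends through each step; on the resulting anti-minimal model $Y$ one has $-K_Y$ semi-ample and, by \cite{bh}, $Y$ log canonical, whence $X$ is of Calabi--Yau type by the argument of \cite{gost}.

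\textbf{Surface case.} Your argument in the uniruled case appeals to a structure theorem (``toric, or a $\mathbb P^1$-bundle over an elliptic curve, or a cone over such'') that is only available for \emph{smooth} surfaces; as the paper remarks explicitly, endomorphisms need not lift to the minimal resolution, so no such classification is known in the singular case, and this is precisely the difficulty. The paper instead first shows any such surface is log canonical (Theorem~\ref{singularities-char p}), then uses Zhang's lemma that some iterate of $f^*$ is scalar on $N^1$ to make an MMP $f$-equivariant. For rational surfaces with rational singularities it splits on $\kappa(-K_S)$: when $\kappa=2$ the surface is a Mori dream space by \cite{tvav}; when $\kappa\le 1$ the Zariski decomposition $-K_S\sim_{\mathbb Q}P+N$ has $P$ semi-ample and $N$ totally $f$-invariant, and \cite[Corollary~3.3]{bh} bounds the coefficients of $N$. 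The remaining (non-rational or non-rational-singularity) cases are handled by running an MMP to a Mori fibre space and applying either a vanishing/spectral-sequence argument to reduce to the rational--rational case, or the canonical bundle formula over an elliptic base.
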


We combine the techniques of \cite[Theorem\,1.4]{gost} and
\cite[Theorem\,1.1]{bh} for our proof
of Theorem \ref{main-rem} for Mori dream spaces. 

In the case of surfaces, 
we can check that Theorem \ref{main-rem} holds for smooth surfaces by
Fujimoto--Nakayama's classification \cite{f1} and
\cite[Theorem\,3]{nakayama-ruled}. But in general, since
endomorphisms can not be lifted on the minimal resolution, it does not seem
easy to reduce the singular case to the smooth one. Thus we need to argue
more directly and discuss separated cases according to the anti-Kodaira dimension, see Section \ref{surfaces}. 
We also treat Conjecture \ref{cy conj} in the \'etale  in codimension one case, which is the global version of \cite{Yuchen}, in Section \ref{codim 1 etale}.  

\begin{ack} A.B is partially supported by ANR Labex CEMPI
  (ANR-11-LABX-0007-01), ANR projects CLASS (ANR-10-JCJC-0111) and MACK (ANR-10-BLAN-0104). Y.G is partially supported by the Grant-in-Aid for Scientific Research
  (KAKENHI No.\ 26707002). The authors express their gratitude to
  Professors Sebastian Boucksom, De Qi Zhang,  Keiji Ogusio, Yujiro
  Kawamata, Paolo Cascini, and  Mr. Sheng Meng for useful discussions and comments. Moreover the authors thank to the referee for pointing out serious and minor mistakes in the previous version.
\end{ack}

 \section{Preliminaries}

In this section, we collect definitions and lemmas needed for the proof of Theorem \ref{main-rem}.

\begin{defi}\label{def of pol end}
Let $X$ be a normal projective variety and $f: X \to X$ a surjective morphism.

We call $f$ {\it polarized} if there exists an ample Cartier divisor $H$ and a positive $q \in \mathbb{Z}$ such that $f^*H \sim_{\mathbb{Z}} qH$. We call it {\it non-trivial} if $f$ is not an isomorphism.
\end{defi}

In this paper, we use the following standard terminologies:  

\begin{defi}[{cf.~\cite[Definition 2.34]{komo}, \cite[Remark 4.2]{schsmith-logfano}}]\label{sing of pairs}
Let $X$ be a normal variety  and $\Delta$ be an  $\mathbb{Q}$-divisor on $X$ such that $K_X+\Delta$ is $\mathbb{Q}$-Cartier. 
Let $\pi: \widetilde{X} \to X$ be a birational morphism from a normal variety $\widetilde{X}$.
Then we can write 
$$K_{\widetilde{X}}=\pi^*(K_X+\Delta)+\sum_{E}a(E, X, \Delta) E,$$ 
where $E$ runs through all the distinct prime divisors on $\widetilde{X}$ and the $a(E, X, \Delta)$ are rational numbers. 
We say that the pair $(X, \Delta)$ is \textit{sub log canonical} (for short \textit{sub lc}) 
 if $a(E, X, \Delta) \ge -1$  for every prime divisor $E$ over $X$.  In particular, we just call \textit{log canonical} (for short \textit{lc}) if $\Delta$ is effective and $(X,\Delta)$ is sub lc.  We say that $(X, \Delta)$ is plt if  $a(E, X, \Delta) >-1$ for every {\it exceptional}  prime divisor $E$ over $X$ and $\Delta$ is effective.
If $\Delta=0$, we simply say that $X$ has log canonical singularities. 
\end{defi}

\begin{defi}[cf. {\cite[Lemma-Definition 2.6]{prokshok-mainII}}]\label{Fano pair}Let $X$ be a normal projective variety and $\Delta$ be an effective $\mathbb{Q}$-divisor on $X$ such that $K_X+\Delta$ is $\mathbb{Q}$-Cartier. We say that $(X,\Delta)$ is {\em log Calabi-Yau} if $K_X+\Delta \sim_{\mathbb{Q}}0$ and $(X, \Delta)$ is log canonical. 
We say that $X$ is of \textit{Calabi--Yau type} if 
there exists an effective $\mathbb{Q}$-divisor $\Delta$ on $X$ such that $(X, \Delta)$ is log Calabi-Yau.  
\end{defi}

 The following two results are used in the proof of Theorem \ref{main-rem};
 
 \begin{lem}[{\cite[Lemma 2.1]{nakayama-zhang10}}]\label{nz-lem}Let
   $X$ be an $n$-dimensional normal projective variety admitting an
   endomorphism $f$ such that there exists a nef and big Cartier
   divisor $H$ and positive real number $q$ such that $f^*H\equiv qH$,
   i.e. $f$ is quasi-polarized. Then $\mathrm{deg}\,f=q^{n}$ and $|\lambda|=q$ for  any eigenvalue $\lambda$ of  $f^*: N^1(X)_{\mathbb{R}} \to N^1(X)_{\mathbb{R}}$.  \end{lem}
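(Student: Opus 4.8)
The plan is first to compute $\deg f$ by an intersection number, and then to realize the action of $f^{*}$ on $N^{1}(X)_{\mathbb{R}}$ as $q$ times an isometry of a Lorentzian‑type form canonically attached to $H$. We may assume $n\ge 2$; for $n=1$ the space $N^{1}(X)_{\mathbb{R}}$ is a line on which $f^{*}$ acts by multiplication by $q$, and there is nothing to prove. First I would recall that a surjective endomorphism of a projective variety is finite, hence in particular generically finite; then the projection formula gives $\int_{X}(f^{*}H)^{n}=(\deg f)\,(H^{n})$, while $f^{*}H\equiv qH$ gives $(f^{*}H)^{n}=q^{n}(H^{n})$, and $H^{n}>0$ because $H$ is nef and big. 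Comparing, $\deg f=q^{n}$.

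Next, consider the symmetric bilinear form $B(\alpha,\beta):=\alpha\cdot\beta\cdot H^{n-2}$ on the finite‑dimensional real vector space $V:=N^{1}(X)_{\mathbb{R}}$. The crucial input is the Hodge index theorem for the nef and big class $H$: $B$ is nondegenerate of signature $(1,\dim_{\mathbb{R}}V-1)$, with $B([H],[H])=H^{n}>0$ spanning its positive part. Using $f^{*}H\equiv qH$, the functoriality of $f^{*}$, the projection formula, and $\deg f=q^{n}$, one computes
\[
B(f^{*}\alpha,f^{*}\beta)=q^{\,2-n}\bigl(f^{*}\alpha\cdot f^{*}\beta\cdot (f^{*}H)^{n-2}\bigr)=q^{\,2-n}(\deg f)\,B(\alpha,\beta)=q^{2}B(\alpha,\beta),
\]
so that $g:=q^{-1}f^{*}\in GL(V)$ is a $B$‑isometry fixing $[H]$. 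Because $B$ is positive definite on the line $\mathbb{R}[H]$ and nondegenerate, the $B$‑orthogonal splitting $V=\mathbb{R}[H]\oplus [H]^{\perp}$ is $g$‑invariant and $B$ is negative definite on $[H]^{\perp}$; hence $g|_{[H]^{\perp}}$ lies in a compact orthogonal group, all eigenvalues of $g$ have modulus $1$, and therefore every eigenvalue of $f^{*}=qg$ has modulus $q$.

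The step I expect to be the main obstacle is the Hodge index statement for $B$, namely that $\alpha\cdot H^{n-1}=0$ and $\alpha\ne 0$ force $\alpha^{2}\cdot H^{n-2}<0$; this is precisely where bigness of $H$, and not merely nefness, enters. I would obtain it either by citing the Hodge index theorem, which holds already for nef and big classes, or by a Lefschetz‑type reduction to the classical surface case, cutting $X$ down to a surface by general ample sections after replacing $H$ by $A$ in a Kodaira decomposition $H\sim_{\mathbb{Q}}A+E$ with $A$ ample and $E$ effective. If one prefers to invoke only the ample Hodge index theorem, there is a slightly longer alternative. Both $f^{*}$ and $f_{*}$ preserve the salient, full‑dimensional cone $\operatorname{Nef}(X)\subset V$, and the functional $\ell\colon\alpha\mapsto\alpha\cdot H^{n-1}$ satisfies $\ell\circ f^{*}=q\,\ell$ and $\ell\circ f_{*}=q^{\,n-1}\ell$ and is strictly positive on $\operatorname{Nef}(X)\setminus\{0\}$ (Kodaira's lemma reduces this last point to the ample Hodge index theorem, since for $\alpha$ nef all terms that arise are nonnegative); hence $\|(f^{*})^{k}\|\le C q^{k}$ and $\|(f_{*})^{k}\|\le C' q^{\,k(n-1)}$, so the spectral radii of $f^{*}$ and $f_{*}$ are at most $q$ and $q^{\,n-1}$. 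Since $f_{*}f^{*}=q^{n}\cdot\mathrm{id}$, the eigenvalues of $f_{*}$ are the numbers $q^{n}/\lambda$, and one concludes both $|\lambda|\le q$ and $|\lambda|\ge q$ for every eigenvalue $\lambda$ of $f^{*}$.
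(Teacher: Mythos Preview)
The paper does not give its own proof of this lemma; it simply cites \cite[Lemma 2.1]{nakayama-zhang10} and remarks that the proof there is ``a nice application of the Perron--Frobenius Theorem''. So the relevant comparison is between your two approaches and the Perron--Frobenius argument the paper points to.

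Your alternative argument is correct and is essentially that Perron--Frobenius approach: both $f^{*}$ and $f_{*}=q^{n}(f^{*})^{-1}$ preserve the full-dimensional salient cone $\mathrm{Nef}(X)$, the linear functional $\ell(\alpha)=\alpha\cdot H^{n-1}$ is strictly positive on $\mathrm{Nef}(X)\setminus\{0\}$ (this uses bigness of $H$ exactly as you indicate), and $\ell$ is scaled by $q$, resp.\ $q^{\,n-1}$, under $f^{*}$, resp.\ $f_{*}$. Compactness of the cross-section $\{\ell=1\}\cap\mathrm{Nef}(X)$ then bounds the spectral radii and forces $|\lambda|=q$.

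Your primary approach via the Lorentzian form $B(\alpha,\beta)=\alpha\cdot\beta\cdot H^{n-2}$, however, has a real gap in the nef-and-big (non-ample) case. The strict Hodge index statement you need, that $\alpha\neq 0$ and $\alpha\cdot H^{n-1}=0$ force $\alpha^{2}\cdot H^{n-2}<0$, can fail, and $B$ can be degenerate. For a concrete example, let $\pi\colon X\to Y$ be a birational morphism of smooth projective $n$-folds ($n\ge 3$) contracting a prime divisor $E$ to a point, and set $H=\pi^{*}A$ with $A$ ample on $Y$. Then $H$ is nef and big, but $H|_{E}\equiv 0$, so $E\cdot D\cdot H^{n-2}=(D|_{E})\cdot(H|_{E})^{n-2}=0$ for every divisor $D$; thus $E\neq 0$ lies in the radical of $B$. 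Your proposed fix of ``replacing $H$ by $A$ in a Kodaira decomposition'' changes the form $B$ to one that $f^{*}$ no longer scales, so it does not control the eigenvalues of $f^{*}$ on the radical. The argument is fine when $H$ is ample, but for the lemma as stated you should rely on your second approach; that is the one Nakayama--Zhang use and the one the paper alludes to.
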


The above lemma is obtained by a nice application of the Perron--Frobenius Theorem, see the proof of \cite[Lemma 2.1]{nakayama-zhang10}.

\begin{thm}[{\cite[Theorem 1.1]{bh}}]\label{singularities}Let $X$ be a normal projective varieties such that there exists a non trivial polarized endomorphism. If $X$ is $\mathbb{Q}$-Gorenstein, then it has log canonical singularities.

\end{thm}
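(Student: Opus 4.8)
The plan is to argue by contradiction: suppose $X$ is $\mathbb{Q}$-Gorenstein but not log canonical, so that the non-log-canonical locus $\Nlc(X)$ is a non-empty proper closed subset of $X$. The first step is to show that log canonicity descends along the finite morphism $f$ and to produce an $f$-completely invariant proper closed subset contained in $\Nlc(X)$. Write $K_X=f^*K_X+R_f$, where $R_f\ge 0$ is the (effective) ramification divisor of $f$. Given a prime divisor $E$ over $X$, extract it on a model $\mu\colon X'\to X$, let $W$ be the normalization of the fibre product $X'\times_{\mu,X,f}X$ (so $W\to X$ is birational and $W\to X'$ is finite), and compute, via Riemann--Hurwitz, for each prime divisor $F$ on $W$ dominating $E$ with ramification index $r_F\ge 1$,
\[
a(F,X)\;=\;r_F\bigl(a(E,X)+1\bigr)\;-\;1\;-\;\mult_F\bigl(\text{pull-back of }R_f\bigr).
\]
Since $R_f\ge 0$ and $r_F\ge 1$, if $a(E,X)<-1$ then $a(F,X)<-1$; hence $X$ fails to be lc at $f(y)$ whenever it fails to be lc at $y$, i.e.\ $f^{-1}(\Nlc(X))\subseteq\Nlc(X)$. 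The decreasing chain $\Nlc(X)\supseteq f^{-1}(\Nlc(X))\supseteq f^{-2}(\Nlc(X))\supseteq\cdots$ of non-empty closed subsets stabilizes; call its limit $Z$. Then $Z\subseteq\Nlc(X)$, $Z\ne X$, $f^{-1}(Z)=Z$, and, since $f$ is surjective, $f(Z)=Z$.

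Next I would use the polarization to reduce to a local statement. As $f$ is non-trivial and polarized, $q\ge 2$ and $\deg f=q^{\dim X}\ge 2$ by Lemma~\ref{nz-lem}, and $f|_Z$ is a quasi-polarized endomorphism of $Z$ with $(f|_Z)^{-1}(Z)=Z$. Since $f^{-1}(Z)=Z$, for every $x\in Z$ the whole fibre $f^{-1}(x)$ is contained in $Z$, so $\sum_{y\in f^{-1}(x)}(\text{local degree of }f\text{ at }y)=q^{\dim X}$ with all terms supported on $Z$; when $\dim Z=0$ this forces, after replacing $f$ by the iterate that is the identity on the finite set $Z$, every point of $Z$ to be a fixed point of $f$ of local degree $\ge 2$, and for $\dim Z\ge 1$ one runs the same analysis on $Z$ (which again carries a quasi-polarized self-map), by induction on $\dim Z$ and using the existence of periodic points, to obtain a point $x\in Z$ fixed by an iterate of $f$ and of local degree $\ge 2$. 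After that iteration the germ $(X,x)$ is $\mathbb{Q}$-Gorenstein and carries a non-invertible finite endomorphism, so the positive answer to the local $\mathbb{Q}$-Gorenstein case of Question~\ref{yz-ask} (Boucksom--de~Fernex--Favre) produces an effective $\Delta$ with $(X,\Delta)$ log canonical near $x$; as $X$ is $\mathbb{Q}$-Gorenstein and $\Delta\ge 0$ we get $a(E,X)\ge a(E,X,\Delta)\ge -1$ for every $E$ over $x$, so $(X,0)$ is log canonical near $x$, contradicting $x\in\Nlc(X)$. The case in which $f$ is étale in codimension one is genuinely different — then $R_f=0$ and the displayed identity merely preserves discrepancies — and is treated separately, in the spirit of \cite{Yuchen}.

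The main obstacle is the reduction in the previous paragraph: the Boucksom--de~Fernex--Favre volume, and with it their answer to Question~\ref{yz-ask}, is designed for \emph{isolated} singularities, whereas $\Nlc(X)$ and the invariant locus $Z$ may well be positive-dimensional, and one cannot slice $X$ by hyperplane sections to isolate a non-lc point without destroying the endomorphism. One is therefore forced either to propagate the local $\mathbb{Q}$-Gorenstein statement along the strata of $Z$, exploiting that each stratum carries its own quasi-polarized self-map, or to bypass the local result by a global argument: a Nadel-type vanishing theorem for the non-lc ideal sheaf of $X$ together with the fact that $f^*$ scales $H$-degrees by $q$, which one would like to show is incompatible with $\Nlc(X)\ne\emptyset$ persisting under all the pull-backs $f^{-m}$. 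Making this incompatibility precise — in particular controlling the contribution of the ramification divisor $R_f$ to the relevant cohomology — is where the real work lies.
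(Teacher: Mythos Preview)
This theorem is quoted from \cite{bh}; the present paper does not reprove it but records that the argument there uses Odaka--Xu's lc modifications \cite[Theorem~1.1]{ox} together with a study of lc centres. Your route is different: you try to reduce to the local $\mathbb{Q}$-Gorenstein result of \cite{BdFF} at a totally invariant point of $\Nlc(X)$. You have correctly identified the main gap yourself: \cite{BdFF} applies only to \emph{isolated} singularities, and nothing forces the completely invariant set $Z\subseteq\Nlc(X)$ you construct to be zero-dimensional, nor can one slice by hyperplanes without losing the endomorphism. There is a second gap in the same reduction: for $\dim Z\ge 1$, producing a periodic point $x\in Z$ is not enough, since a finite endomorphism of the germ $(X,x)$ requires $f^{-1}(x)=\{x\}$ set-theoretically, not merely $f(x)=x$; your induction sketch does not secure this.

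The approach in \cite{bh} avoids both problems by staying global. Rather than localizing at a point, one passes to the lc modification $\pi\colon(Y,\Delta_Y)\to X$, where $(Y,\Delta_Y)$ is lc, $K_Y+\Delta_Y=\pi^*K_X$, and $\pi$ is an isomorphism precisely over the lc locus of $X$; the contradiction is then obtained by analysing how the polarized endomorphism interacts with the exceptional locus of $\pi$ and the lc centres of $(Y,\Delta_Y)$, with the polarization controlling the relevant intersection numbers. A minor point on your first paragraph: the sentence ``$X$ fails to be lc at $f(y)$ whenever it fails to be lc at $y$'' asserts $f(\Nlc(X))\subseteq\Nlc(X)$, the opposite of the inclusion $f^{-1}(\Nlc(X))\subseteq\Nlc(X)$ you then write down; it is the latter that actually follows from your discrepancy computation (equivalently, from comparing the pairs $(X,0)$ and $(X,-R_f)$ along the finite cover $f$).
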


 The above theorem is proved by using Odaka--Xu's lc modifications for pairs \cite[Theorem 1.1]{ox} and a careful study of lc centers in \cite{bh}.  
 
 And we have the same theorem for normal surfaces;
 
 \begin{thm}\label{singularities-char p}Let $X$ be a normal projective surface  such that there exists a non-trivial polarized  endomorphism on $X$. Then it has log canonical singularities. In particular, $X$ is $\mathbb{Q}$-Gorenstein. 

\end{thm}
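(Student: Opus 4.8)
The plan is to reduce to Theorem \ref{singularities}: once we know $X$ is $\mathbb{Q}$-Gorenstein, the log canonicity follows immediately from the already-established result for $\mathbb{Q}$-Gorenstein varieties admitting non-trivial polarized endomorphisms. So the real content is to prove that a normal projective surface $X$ carrying a non-trivial polarized endomorphism $f$ is $\mathbb{Q}$-Gorenstein. Since normal surface singularities are automatically $\mathbb{Q}$-factorial away from finitely many points, the issue is purely local at the (finitely many) singular points of $X$: we must rule out the existence of a non-$\mathbb{Q}$-Gorenstein singular point.

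First I would argue that $f$ permutes the singular points of $X$: indeed $f$ is finite (being polarized, hence surjective with $\deg f = q^2 > 1$ by Lemma \ref{nz-lem}) and $f$ restricts to an isomorphism on the smooth locus of $X$ onto its image inside the smooth locus only if $f$ is étale there, which need not hold; but the key point is the converse direction — $f^{-1}(X_{\mathrm{sing}}) \supseteq X_{\mathrm{sing}}$, because the local analytic type at $x$ controls the local analytic type at $f(x)$ up to a finite cover, and a finite cover of a smooth point is not singular. More carefully, if $x \in X$ is a singular point then $f(x)$ is singular (a smooth point cannot have a singular point in the fiber of a finite morphism after normalization — this uses purity/Zariski's main theorem arguments, or one simply notes $\widehat{\mathcal{O}}_{X,x}$ is a finite module over $\widehat{\mathcal{O}}_{X,f(x)}$ and if the target were regular the source would be Cohen--Macaulay and, being a finite extension of a regular local ring of dimension two with the right properties, would force regularity by Auslander--Buchsbaum-type considerations only in special cases — so I would instead use the standard fact that finite morphisms of normal surfaces send singular points to singular points, which is classical). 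Hence $f$ maps the finite set $X_{\mathrm{sing}}$ into itself, and since $f$ is finite and $X_{\mathrm{sing}}$ is finite, $f$ permutes $X_{\mathrm{sing}}$; replacing $f$ by a power $f^N$, I may assume $f(x) = x$ for every singular point $x$.

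Now fix a singular point $x$ with $f(x) = x$; this gives a finite local endomorphism $f : (X,x) \to (X,x)$ which is non-invertible at at least one such point (otherwise $f$ would be étale in codimension one and a degree argument via Lemma \ref{nz-lem}, $\deg f = q^2$, would still allow reduction, but in any case at a non-invertible point we have genuine ramification). The plan is then to invoke the local theory: by Wahl \cite{wahl} and Favre \cite{favre}, or by the solution to Question \ref{yz-ask} in the surface case cited in the introduction, a normal surface singularity admitting a non-invertible finite endomorphism admits a boundary $\Delta$ with $(X,\Delta)$ log canonical; in particular such a singularity is $\mathbb{Q}$-Gorenstein (a log canonical pair $(X,\Delta)$ with $\Delta$ effective on a surface forces $K_X$ to be $\mathbb{Q}$-Cartier, since the dual graph must be of a constrained shape — ADE, or an lc chain/cycle — all of which are $\mathbb{Q}$-Gorenstein, indeed the non-$\mathbb{Q}$-Gorenstein normal surface singularities are precisely those not appearing on any lc surface pair). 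If the cited local results only handle points where $f$ is genuinely non-invertible, I would separately note that at points where the local $f^N$ is an isomorphism there is nothing to prove for $\mathbb{Q}$-Gorensteinness there; but a cleaner route is: since the singular locus is finite and $f$-invariant, and $f$ is globally non-trivial hence (being polarized) not étale in general, at least one singular point is a non-invertible fixed point of $f^N$, and for $\mathbb{Q}$-Gorensteinness of $X$ I actually need it at \emph{every} singular point — so I must ensure $f^N$ is non-invertible at each, which I can do by a further trick: if $f^N$ is a local isomorphism at some singular point $x_0$, then $f$ permutes a neighborhood structure, but globally $f$ is non-trivial, so by considering the action on $N^1(X)$ and Lemma \ref{nz-lem} one sees $f$ cannot be a local isomorphism at all singular points simultaneously unless... — this is the delicate bookkeeping I expect to be the main obstacle.

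The main obstacle, therefore, is not the local input (which is Wahl/Favre) nor the reduction to Theorem \ref{singularities}, but the passage from "some singular point is a non-invertible fixed point" to "$X$ is $\mathbb{Q}$-Gorenstein at every singular point." I would resolve this either by (i) a direct argument that on a normal surface the existence of a polarized endomorphism forces $\mathbb{Q}$-Gorensteinness globally via the contraction theory — run a $K_X$-MMP only after knowing $K_X$ is $\mathbb{Q}$-Cartier, which is circular, so instead — (ii) the observation that if even one singular point were non-$\mathbb{Q}$-Gorenstein, then $X$ itself is non-$\mathbb{Q}$-Gorenstein, and I would then derive a contradiction directly from the existence of $f$: take the minimal resolution $\mu : \widetilde{X} \to X$, pull back a general member of the polarization, and use the self-map's compatibility with the numerical structure on $\widetilde{X}$ together with the fact (Nakayama--Zhang, Favre) that the exceptional configuration over a non-$\mathbb{Q}$-Gorenstein point cannot be stable under the induced correspondence. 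Concretely, following Favre \cite{favre}: such a point must have an exceptional divisor $E$ with $E^2$ and the intersection form ruling out an lc structure, and the endomorphism induces an inequality on the local volume invariant that is impossible unless the singularity is log canonical (hence $\mathbb{Q}$-Gorenstein). This last step — importing Favre's local volume/eigenvalue estimate and making it global-compatible — is where the technical weight lies, and I would spend the bulk of the proof there, citing \cite{favre} and \cite{wahl} for the core inequality and using Lemma \ref{nz-lem} to supply the eigenvalue normalization $|\lambda| = q$ that drives it.
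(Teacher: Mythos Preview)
Your overall strategy is the paper's: reduce to the local germ at each singular point and apply Wahl's result \cite[Theorem~(a)(b)(c), p.~626]{wahl} that a normal surface singularity admitting a non-invertible finite endomorphism is log canonical (hence $\mathbb{Q}$-Gorenstein, by the classification of lc surface singularities). The paper's entire proof is the single citation of \cite[Proposition~3.1]{bh} together with Wahl.

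Where your argument stalls is exactly the reduction you flag as ``the main obstacle'': producing a \emph{non-invertible} local endomorphism at \emph{every} singular point. Your claimed ``standard fact that finite morphisms of normal surfaces send singular points to singular points'' is false as stated (the $A_1$ germ maps finitely onto $(\mathbb{C}^2,0)$), and the Auslander--Buchsbaum heuristic you sketch does not rescue it. Your fallback --- feeding the global eigenvalue $q$ from Lemma~\ref{nz-lem} into Favre's local volume inequality --- does not work either: Wahl/Favre's inequality compares the volume to the \emph{local} degree of the germ map, which is $1$ precisely when $f$ is a local isomorphism at $x$, and the global spectral radius $q$ has no direct bearing on that local degree. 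The clean fix, which is exactly what \cite[Proposition~3.1]{bh} supplies, uses the polarization in an essential way: it shows that after replacing $f$ by an iterate, every point of the non-lc locus is \emph{totally invariant}, i.e.\ $f^{-N}(x)=\{x\}$ set-theoretically. Then the local degree at $x$ equals the global degree $q^{2}>1$, Wahl applies, and $x$ is forced to be lc after all --- a contradiction, so the non-lc locus is empty.

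A minor redundancy: the detour ``first prove $\mathbb{Q}$-Gorenstein, then invoke Theorem~\ref{singularities}'' is unnecessary. Wahl's theorem already gives log canonicity of $(X,0)$ at each germ, and for surfaces that contains the $\mathbb{Q}$-Gorenstein conclusion.
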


\begin{proof}Apply \cite[Proposition\,3.1]{bh}  and \cite[Theorem (a) (b) (c), p. 626]{wahl}.

\end{proof}

\begin{lem}\label{lem-rational}Let $S$ be a normal projective rational surface.  If $H^i(S,\mathcal{O}_S)=0$ for $i=1,2$, then $S$ has rational singularities.

\end{lem}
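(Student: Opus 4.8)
The plan is to pull the statement back to a resolution and extract it from the Leray spectral sequence. First I would fix a resolution of singularities $\pi \colon \widetilde{S} \to S$ (say the minimal one), which is an isomorphism over the smooth locus of $S$. Since $\pi$ is birational and $S$ is rational, $\widetilde{S}$ is a smooth projective rational surface, and therefore $q(\widetilde{S}) = h^1(\widetilde{S}, \mathcal{O}_{\widetilde{S}}) = 0$ and $p_g(\widetilde{S}) = h^2(\widetilde{S}, \mathcal{O}_{\widetilde{S}}) = 0$, these being birational invariants of smooth projective surfaces which vanish for $\mathbb{P}^2$. This is the only place the rationality hypothesis enters.

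Next I would collect the standard facts about $R^{\bullet}\pi_*\mathcal{O}_{\widetilde{S}}$. As $S$ is normal, $\pi_*\mathcal{O}_{\widetilde{S}} = \mathcal{O}_S$. As $\dim \widetilde{S} = 2$ and the fibres of $\pi$ are at most one-dimensional, $R^q\pi_*\mathcal{O}_{\widetilde{S}} = 0$ for $q \ge 2$; and $R^1\pi_*\mathcal{O}_{\widetilde{S}}$ is supported on the finite set $\operatorname{Sing}(S)$, hence is a skyscraper sheaf. In particular $H^p(S, R^1\pi_*\mathcal{O}_{\widetilde{S}}) = 0$ for $p \ge 1$, and $R^1\pi_*\mathcal{O}_{\widetilde{S}} = 0$ if and only if $H^0(S, R^1\pi_*\mathcal{O}_{\widetilde{S}}) = 0$; the latter vanishing is precisely the assertion that $S$ has rational singularities.

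Now I would invoke the five-term exact sequence of low degrees for the Leray spectral sequence $H^p(S, R^q\pi_*\mathcal{O}_{\widetilde{S}}) \Rightarrow H^{p+q}(\widetilde{S}, \mathcal{O}_{\widetilde{S}})$:
$$0 \to H^1(S, \mathcal{O}_S) \to H^1(\widetilde{S}, \mathcal{O}_{\widetilde{S}}) \to H^0(S, R^1\pi_*\mathcal{O}_{\widetilde{S}}) \to H^2(S, \mathcal{O}_S) \to H^2(\widetilde{S}, \mathcal{O}_{\widetilde{S}}).$$
By hypothesis the first and fourth terms vanish, so $H^0(S, R^1\pi_*\mathcal{O}_{\widetilde{S}})$ is squeezed between the quotient of $H^1(\widetilde{S}, \mathcal{O}_{\widetilde{S}}) = 0$ and an injection into $H^2(S,\mathcal{O}_S) = 0$; hence it is zero and $R^1\pi_*\mathcal{O}_{\widetilde{S}} = 0$, as desired. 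Equivalently, one could compare Euler characteristics: $\chi(\mathcal{O}_S) = \chi(\mathcal{O}_{\widetilde{S}}) - h^0(S, R^1\pi_*\mathcal{O}_{\widetilde{S}})$, and both Euler characteristics equal $1$.

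There is no real difficulty here; it is a routine spectral sequence computation. The only subtlety worth flagging is that both vanishing hypotheses on $S$ are genuinely used — $H^1(S,\mathcal{O}_S) = 0$ to make $H^1(\widetilde{S},\mathcal{O}_{\widetilde{S}}) \to H^0(S, R^1\pi_*\mathcal{O}_{\widetilde{S}})$ surjective, and $H^2(S,\mathcal{O}_S) = 0$ to make $H^0(S, R^1\pi_*\mathcal{O}_{\widetilde{S}})$ inject into zero — and that rationality is used only to force $h^1$ and $h^2$ of $\widetilde{S}$ to vanish, which cannot be deduced from the hypotheses on $S$ alone since the natural map $H^1(S,\mathcal{O}_S) \to H^1(\widetilde{S},\mathcal{O}_{\widetilde{S}})$ runs in the wrong direction to transfer the vanishing upward.
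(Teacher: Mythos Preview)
Your proof is correct and follows essentially the same route as the paper: take a resolution, use the five-term Leray exact sequence together with $H^1(\widetilde{S},\mathcal{O}_{\widetilde{S}})=0$ (from rationality) and the hypotheses on $S$ to force $H^0(S,R^1\pi_*\mathcal{O}_{\widetilde{S}})=0$, then conclude $R^1\pi_*\mathcal{O}_{\widetilde{S}}=0$ since its support is finite. Your write-up is in fact more explicit than the paper's about where rationality enters; one small slip in your final paragraph is that $H^1(S,\mathcal{O}_S)=0$ gives \emph{injectivity} of $H^1(\widetilde{S},\mathcal{O}_{\widetilde{S}})\to H^0(S,R^1\pi_*\mathcal{O}_{\widetilde{S}})$, while surjectivity comes from $H^2(S,\mathcal{O}_S)=0$, but this does not affect the argument.
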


\begin{proof}Let $f:W \to S$ be a resolution of singularities. By the Leray spectral sequence, the vanishing of $H^i(S,\mathcal{O}_S)$ for $i=1,2$ implies that  $H^i(S, f_*\mathcal{O}_W)=0$ for $i=1,2$. Then we have $H^1(W, \mathcal{O}_W)\simeq H^0(S, R^1f_*\mathcal{O}_W)$ by the following exact sequence:
$$0\to H^1(S, \mathcal{O}_S) \to H^1(W, \mathcal{O}_W) \to H^0(S, R^1f_*\mathcal{O}_W) \to H^2(S, f_*\mathcal{O}_W)\to \cdots.
$$

 Thus $H^0(S, R^1f_*\mathcal{O}_W)=0$. Now the support of $R^1f_*\mathcal{O}_W$ is a finite set. Thus we have $R^1f_*\mathcal{O}_W=0$.

\end{proof}

\begin{rem}\label{curve}Let $C$ be a normal projective curve admitting a  separated non-trivial  endomorphism. Then $(C, \frac{1}{q-1}R)$ is a Calabi--Yau pair, where $R$ is the ramification divisor and $q$ is degree of the endomorphism.

\end{rem}

 \section{Mori dream space case}We prove in this section the Conjecture \ref{main-rem} for Mori dream spaces.

\subsection{Mori dream spaces}
Mori Dream Spaces were first introduced by Hu and Keel \cite{hk}. 
\begin{defi}\label{Mori dream space} 
A normal projective variety $X$  is called a \textit{$\mathbb{Q}$-factorial Mori dream space} (or \textit{Mori dream space} for short) 
if $X$ satisfies the following three conditions:
\begin{enumerate}[(i)]
\item $X$ is $\mathbb{Q}$-factorial, $\Pic{(X)}$ is finitely generated,
and $\Pic{(X)}_{\mathbb{Q}} \simeq \mathrm{N}^1{(X)}_{\mathbb{Q}},$\label{fin_pic}
\item $\mathrm{Nef}{(X)}$ is the affine hull of finitely many semi-ample
line bundles, 
\item there exists a finite collection of small birational maps $f_i: X \dasharrow X_i$
such that each $X_i$ satisfies (i) and (ii), and that $\mathrm{Mov}{(X)}$ is the union
of the $f_i^*(\mathrm{Nef}{(X_i)})$.
\end{enumerate}
\end{defi}

\begin{rem}
Over the complex number field,
the finite generation of $\Pic{(X)}$ is equivalent to the condition
$\Pic{(X)}_{\mathbb{Q}} \simeq \mathrm{N}^1{(X)}_{\mathbb{Q}}$.
\end{rem}
On a Mori dream space, as its name suggests, we can run an MMP for any divisor.
\begin{prop}{ (\cite[Proposition 1.11]{hk})}\label{mmp on mds}
Let $X$ be a $\mathbb{Q}$-factorial Mori dream space. 
Then for any divisor $D$ on $X$, a $D$-MMP can be run and terminates. 
\end{prop}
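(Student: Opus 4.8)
The plan is to carry out the $D$-MMP one step at a time, using the finite generation built into the definition of a Mori dream space to perform each individual step, and the induced \emph{Mori chamber decomposition} of the pseudo-effective cone to force termination; along the way one uses that the class of Mori dream spaces is preserved under the operations involved. Conceptually the cleanest framework is Hu--Keel's GIT picture, in which $X$ is a GIT quotient of $\Spec\mathrm{Cox}(X)$ and the steps of the $D$-MMP are variations of the linearization as one crosses the finitely many GIT chambers; but I will describe the argument in birational terms.

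First I would record the facts that make the steps available. Since $\mathrm{Nef}(X)$ is rational polyhedral and generated by semiample bundles, its dual $\overline{\mathrm{NE}}(X)$ is rational polyhedral and every element of every facet of $\mathrm{Nef}(X)$ is semiample. Hence, given $D$ not nef on $X$, one picks an extremal ray $R\subset\overline{\mathrm{NE}}(X)$ with $D\cdot R<0$, takes a semiample class $A$ in the relative interior of the dual facet, and gets the contraction $\phi=\mathrm{Proj}\,R(X,mA)\colon X\to Z$ for $m\gg 0$, which contracts exactly $R$. If $\dim Z<\dim X$ we stop with a $D$-Mori fibre space. If $\phi$ is divisorial, then $Z$ is $\mathbb{Q}$-factorial with $\rho(Z)=\rho(X)-1$ and again a Mori dream space (which one checks directly). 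If $\phi$ is small, the $D$-flip $X^+=\mathrm{Proj}_Z\big(\bigoplus_{m\ge 0}\phi_*\mathcal{O}_X(\lfloor -mD\rfloor)\big)\to Z$ exists because this $\mathcal{O}_Z$-algebra is finitely generated, a consequence of the finite generation of $\mathrm{Cox}(X)$; and $X^+$ is again a $\mathbb{Q}$-factorial Mori dream space, being a small modification of $X$ (hence among the $X_i$ of condition (iii) of Definition \ref{Mori dream space}). Iterating produces a sequence $X=X_0\dasharrow X_1\dasharrow\cdots$ of such steps.

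It remains to show any such sequence is finite. Divisorial contractions strictly decrease the Picard number, so they occur only finitely often; the content is to rule out an infinite chain of flips between two consecutive divisorial contractions. Each $X_j$ in such a chain is a small $\mathbb{Q}$-factorial modification of one fixed Mori dream space, so the cones $\mathrm{Nef}(X_j)$ lie among the finitely many chambers $f_i^*\mathrm{Nef}(X_i)$ tiling $\mathrm{Mov}$ in condition (iii), and each flip moves the class of $D$ from the chamber containing it to an adjacent one. The crux is that this walk through chambers never revisits a chamber: concretely I would fix an ample $H$, run the $D$-MMP with scaling, track the nef thresholds $\lambda_j=\inf\{\,t\ge 0 : D_j+tH \text{ is nef on } X_j\,\}$, and check that the $\lambda_j$ are non-increasing while the pair consisting of the value $\lambda_j$ and the chamber it determines cannot recur, so that finiteness of the decomposition terminates the process, the case $\lambda_j=0$ meaning $D_j$ is nef. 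I expect this monotonicity/no-cycling step to be the main obstacle; the existence of the individual contractions and flips is comparatively formal once finite generation of the Cox ring is in hand. Finally, if $D$ is not pseudo-effective the process cannot terminate with $D_j$ nef, so by the above it ends with a $D$-Mori fibre space.
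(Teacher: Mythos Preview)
The paper does not give its own proof of this proposition; it is simply quoted from \cite[Proposition~1.11]{hk} with no accompanying argument, so there is nothing in the paper to compare against.

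Your sketch is essentially the Hu--Keel argument and is sound in outline. A couple of minor comments: for the existence of the flip one does not need to appeal to the full Cox ring---condition~(iii) of Definition~\ref{Mori dream space} already hands you all the small $\mathbb{Q}$-factorial modifications $X_i$, and the $D$-flip of a small contraction is simply the $X_i$ whose nef cone lies on the other side of the wall you are crossing. For termination, the cleanest way to phrase the ``no cycling'' step is that a $D$-MMP with scaling of an ample $H$ walks along the line segment $\{D+tH : 0\le t\le \lambda_0\}$ inside $N^1(X)_{\mathbb{R}}$, and each flip or divisorial contraction corresponds to crossing one wall of the (finite) Mori chamber decomposition of $\overline{\mathrm{Eff}}(X)$; since the segment meets only finitely many chambers, the process terminates. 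This makes the monotonicity/no-recurrence point you flag as the main obstacle essentially immediate.
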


Moreover the finiteness of nef cone implies the following lemmas:
 
 \begin{lem}\label{projective triviality}Let $X$ be a Mori dream space such that there exists a non trivial polarized endomorphism $f$. Then there exists $m \in \mathbb{N}$ and $\alpha \in \mathbb{R}$  such that $f^{m*} = \alpha  \cdot \mathrm{Id}$ on $\mathrm{NS}_{\mathbb{R}} (X)$. 
 
 \end{lem}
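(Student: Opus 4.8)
The plan is to use the rigidity of the nef cone of a Mori dream space. Since $X$ is a Mori dream space, $\mathrm{Nef}(X)\subseteq N^1(X)_{\mathbb{R}}=\mathrm{NS}_{\mathbb{R}}(X)$ is a pointed, full-dimensional, rational polyhedral cone, so it has only finitely many extremal rays, generated by primitive integral classes $v_1,\dots,v_k$ which together span $N^1(X)_{\mathbb{R}}$. I would show that $f^*$ acts on $N^1(X)_{\mathbb{R}}$ as a linear automorphism carrying $\mathrm{Nef}(X)$ \emph{onto} itself, hence permutes the rays $\mathbb{R}_{\ge 0}v_i$, and then pin down the resulting scalars with Lemma \ref{nz-lem}.

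First I would record that a non-trivial polarized endomorphism $f$, say with $f^*H\sim_{\mathbb{Z}}qH$ and $H$ ample, is finite: a curve $C$ contracted by $f$ would satisfy $f^*H\cdot C=0$ while $qH\cdot C>0$. Using finiteness, for any irreducible curve $C'\subseteq X$ one chooses an irreducible curve $C$ dominating it with $f_*C=dC'$ for some $d\ge 1$, so that $f^*D\cdot C=d\,(D\cdot C')$ for every class $D$ by the projection formula. Specializing to $D\equiv 0$ shows $f^*$ is injective on $N^1(X)_{\mathbb{R}}$, hence a linear isomorphism; the general case shows that $f^*D$ is nef when $D$ is and, conversely, that $D$ is nef whenever $f^*D$ is. Combining these: for $D\in\mathrm{Nef}(X)$ the class $D'=(f^*)^{-1}D$ satisfies $f^*D'=D\in\mathrm{Nef}(X)$, hence $D'\in\mathrm{Nef}(X)$ and $D=f^*D'\in f^*(\mathrm{Nef}(X))$; therefore $f^*(\mathrm{Nef}(X))=\mathrm{Nef}(X)$.

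Next I would observe that a linear automorphism of $N^1(X)_{\mathbb{R}}$ carrying the polyhedral cone $\mathrm{Nef}(X)$ onto itself permutes its finitely many extremal rays, so some power $f^{m*}$ ($m\in\mathbb{N}$) fixes each of them; thus $f^{m*}v_i=c_iv_i$ with $c_i>0$, positivity holding because $f^{m*}$ preserves nefness and is injective. Then Lemma \ref{nz-lem}, applied to $f$, gives that every eigenvalue of $f^*$ has absolute value $q$, whence every eigenvalue of $f^{m*}=(f^*)^m$ has absolute value $q^m$; since each $c_i$ is a positive eigenvalue of $f^{m*}$, all $c_i$ equal $q^m$. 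As the $v_i$ span $N^1(X)_{\mathbb{R}}$ and $f^{m*}$ multiplies each by $q^m$, we get $f^{m*}=q^m\cdot\mathrm{Id}$ on $\mathrm{NS}_{\mathbb{R}}(X)$, which is the claim with $\alpha=q^m$.

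The one step that is not purely formal is the surjectivity $f^*(\mathrm{Nef}(X))=\mathrm{Nef}(X)$: a linear automorphism can in general map a polyhedral cone strictly into itself without permuting its rays (think of a shear of the first quadrant), so the implication ``$f^*D$ nef $\Rightarrow D$ nef'', which rests on $f$ being finite, is exactly what makes the argument go through. Everything after that is routine bookkeeping with the rational polyhedral structure of $\mathrm{Nef}(X)$ and the eigenvalue bound of Lemma \ref{nz-lem}.
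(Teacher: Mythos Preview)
Your argument is correct and follows essentially the same route as the paper: use that $\mathrm{Nef}(X)$ is rational polyhedral, deduce that some power $f^{m*}$ fixes each extremal ray, and then invoke Lemma~\ref{nz-lem} to force all the scalars to coincide. The paper's proof is terser---it simply asserts that ``an endomorphism acts on the nef and pseudo effective cones'' and passes directly to a power fixing the rays---whereas you take care to justify the bijectivity $f^*(\mathrm{Nef}(X))=\mathrm{Nef}(X)$ via finiteness of $f$, which is exactly the point needed to rule out the shear-type counterexamples you mention; this extra precision is welcome but does not constitute a different approach.
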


\begin{proof}Note that an endomorphism acts on the nef and pseudo effective cones. From the property of Mori dream spaces, the nef cone is rational polyhedral. Thus$f^m$ trivially acts on the set of extremal ray of the nef cone of $X$ for a  sufficiently large and divisible $m$. By Lemma \ref{nz-lem}, $f^{m*} = \alpha \cdot \mathrm{Id}$ on the nef cone, where $\alpha$ is $q$ in  Lemma \ref{nz-lem}. The nef cone contains some basis of  $\mathrm{NS}_{\mathbb{Q}} (X)$. Thus Lemma \ref{projective triviality} follows.
\end{proof}

\begin{lem}\label{induced mor-1}Let $X$ be a projective scheme and $H$ a semi-ample $\mathbb{Q}$-divisor. Suppose that there exists an endomorphism $f:X \to X$ such that $f^*H \sim_{\mathbb{Q}} qH$ for some $q \in \mathbb{Q}$.  Let $g:X \to Y$ be the algebraic fibre space induced by $H$. Then $f$ induces  a polarized  endomorphism  $f_Y$ of $Y$. 

\end{lem}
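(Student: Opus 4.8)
The plan is to construct $f_Y$ by descending $f$ along $g$, and then to read off its polarization from the relation $f^*H\sim_{\mathbb Q}qH$. Recall the structure of $g$: since $H$ is semi-ample, for every sufficiently divisible $m>0$ the system $|mH|$ is base-point free and defines the same morphism $g\colon X\to Y$, with $g_*\mathcal O_X=\mathcal O_Y$ (so $g$ has connected fibres) and $\mathcal O_X(mH)\cong g^*\mathcal O_Y(1)$ for an ample line bundle $\mathcal O_Y(1)$ on $Y$; fix such an $m$ and a Cartier divisor $A$ with $\mathcal O_Y(A)=\mathcal O_Y(1)$, so that $g^*A\sim mH$. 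In particular an irreducible curve $C\subset X$ is contracted by $g$ to a point if and only if $H\cdot C=0$. We may assume $H\not\equiv 0$, the case $H\equiv 0$ (where $g$ is the structure morphism to a point) being trivial.

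The crux is to show that $g\circ f\colon X\to Y$ contracts every fibre of $g$. First I claim that if $g$ contracts an irreducible curve $C\subset X$ to a point, then it also contracts $f(C)$ to a point. This is clear if $f|_C$ is constant; otherwise $f|_C$ is finite of some degree $d\ge 1$ onto $C':=f(C)$, and the projection formula together with $f^*H\equiv qH$ gives
\[
0=q\,(H\cdot C)=f^*H\cdot C=H\cdot f_*C=d\,(H\cdot C'),
\]
so $H\cdot C'=0$ and $g(C')$ is a point. Now fix a fibre $F=g^{-1}(y)$; it is connected and proper. If $(g\circ f)(F)$ were not a single point then, $F$ being connected, some irreducible component $F_0$ of $F$ would satisfy $\dim (g\circ f)(F_0)\ge 1$, and a general complete-intersection curve $C\subset F_0$ would then satisfy $\dim (g\circ f)(C)=1$; but $C\subset F$ is $g$-contracted, contradicting the claim. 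Hence $g\circ f$ contracts every fibre of $g$.

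By the rigidity lemma --- applicable because $g$ is proper with $g_*\mathcal O_X=\mathcal O_Y$ --- there is a unique morphism $f_Y\colon Y\to Y$ with $f_Y\circ g=g\circ f$; it is surjective because $f$ and $g$ are, hence is an endomorphism of $Y$. To see that $f_Y$ is polarized, pull $A$ back along $g$:
\[
g^*(f_Y^*A)=(g\circ f)^*A=f^*(g^*A)\sim f^*(mH)\sim_{\mathbb Q}q\,mH\sim_{\mathbb Q}g^*(qA),
\]
so $g^*(f_Y^*A-qA)\sim_{\mathbb Q}0$; since $g_*\mathcal O_X=\mathcal O_Y$ makes $g^*\colon\operatorname{Pic}(Y)_{\mathbb Q}\to\operatorname{Pic}(X)_{\mathbb Q}$ injective (projection formula $g_*g^*L\cong L$), we get $f_Y^*A\sim_{\mathbb Q}qA$ with $A$ ample Cartier on $Y$. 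Finally $q$ is a positive integer: it is a rational eigenvalue of $f^*$ on $N^1(X)$ with eigenvector $[H]\neq 0$, hence a root of a monic integral polynomial, hence an algebraic integer and so an integer, and it is positive because $f^*H$ is nef and nonzero. Replacing $A$ by a suitable multiple $nA$ then upgrades $f_Y^*A\sim_{\mathbb Q}qA$ to $f_Y^*(nA)\sim_{\mathbb Z}q\,(nA)$, so $f_Y$ is polarized.

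The step I expect to be the genuine obstacle is the descent --- precisely, promoting ``$f$ sends $g$-contracted curves to $g$-contracted curves'' to ``$g\circ f$ contracts every fibre of $g$'', so that the rigidity lemma applies; once that is in hand, the construction of $f_Y$ and the verification that it is polarized are formal.
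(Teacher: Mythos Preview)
Your proof is correct and follows essentially the same line as the paper's: both hinge on the observation that $g$ and $g\circ f$ contract the same curves (via the projection formula and $f^*H\equiv qH$), and then descend $f$ to $Y$. The only difference is packaging: the paper takes the Stein factorization $g'\colon X\to Y'$ of $g\circ f$ and notes that $g\simeq g'$ because two algebraic fibre spaces from $X$ contracting the same curves coincide, which immediately yields the induced map $h\colon Y\simeq Y'\to Y$; you instead verify directly that $g\circ f$ collapses each fibre of $g$ and invoke the rigidity lemma. The Stein-factorization route sidesteps precisely the step you flagged as the obstacle --- the finite part $Y'\to Y$ absorbs it --- so it is slightly slicker, while your explicit verification that $q\in\mathbb Z_{>0}$ goes beyond what the paper's proof records.
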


\begin{proof}Take the Stein factorization $g':X \to Y'$ of $g \circ
  f:X \to Y$. Then we have an isomorphism $g \simeq g'$ since $g$ and
  $g'$ are algebraic fibre spaces and we see that  the same curves
  are contracted by looking at the intersection numbers. Let $h:Y \simeq Y' \to Y$ be the induced map. We see that $h$ gives a polarization. Indeed by definition we have a $\mathbb{Q}$-ample divisor $H_Y$ such that $g^*H_Y\sim_{\mathbb{Q}}H$. Thus we have also  $h^*H_Y\sim_{\mathbb{Q}}qH_Y$ by the projection formula.

\end{proof}

\begin{lem}\label{induced mor-2}Let $\pi:X \to S$ be an algebraic fiber space  of normal projective varieties and $H$ a  Cartier divisor such that the section ring $R_\pi(X,H)$ is a finitely generated  $\mathcal{O}_S$-algebra, where 
$$\displaystyle R_\pi(X,H):=\bigoplus_{m \in \mathbb{Z}_{\geq 0}} \pi_*\mathcal{O}_X(mH).$$ Suppose that there exists endomorphisms $f:X \to X$ and  $f_S:S\to S$ such that the diagram
$$ \begin{CD}
X @>f>> X\\
@VV\pi V @VV\pi V\\
S @>f_S>> S
\end{CD}$$

commutes, $f^*H \sim_{\mathbb{Z}, \pi} qH$ for some $q \in \mathbb{Z}$, and $f_{S}^*H_S \sim_{\mathbb{Z}, \pi} qH_S$ for some ample Cartier divisor $H_S$ on S (i.e. $f_S$ is polarized).  Let $g:X \dashrightarrow Y$ be a dominant rational map to $Y=\mathrm{Proj}_{S}\,R_{\pi}(X,H)$. Then $f$ induces  a polarized  endomorphism  $f_Y$ of $Y$ over $S$ such that the diagram
$$ \begin{CD}
Y @>f_Y>> Y\\
@VV \pi_Y V @VV \pi_Y V\\
S @>f_S>> S
\end{CD}$$

commutes. 

\end{lem}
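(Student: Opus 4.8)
The plan is to construct $f_Y$ by functoriality of the relative $\mathrm{Proj}$ construction, arguing in the spirit of Lemma \ref{induced mor-1}. Write $R:=R_\pi(X,H)$, with graded pieces $R_m=\pi_*\mathcal O_X(mH)$; after replacing $H$ by a sufficiently divisible multiple (which changes neither $Y$ nor the shape of the hypotheses) we may assume $R$ is generated in degree one over $\mathcal O_S$, so that $\mathcal O_Y(1)$ is an invertible, $\pi_Y$-ample sheaf. Since the sought morphism must satisfy $\pi_Y\circ f_Y=f_S\circ\pi_Y$, giving $f_Y$ is the same as giving a morphism $Y\to Y\times_{S,f_S}S$ over $S$; and $Y\times_{S,f_S}S=\mathrm{Proj}_S(f_S^*R)$ by base change of the relative $\mathrm{Proj}$. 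Thus it suffices to produce a degree-preserving homomorphism of graded $\mathcal O_S$-algebras $f_S^*R\to R'$, where $\mathrm{Proj}_S R'$ is canonically $Y$, and then to check that the resulting rational map is everywhere defined.

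To build the homomorphism, apply $\pi_*$ to the unit $\mathrm{id}\to f_*f^*$ and use $\pi\circ f=f_S\circ\pi$ to rewrite the target: this gives a natural map $\pi_*\mathcal O_X(mH)\to(f_S)_*\bigl(\pi_*f^*\mathcal O_X(mH)\bigr)$, i.e., by adjunction, an $\mathcal O_S$-linear map $f_S^*\pi_*\mathcal O_X(mH)\to\pi_*f^*\mathcal O_X(mH)$ for every $m$. Since $f^*H\sim_{\mathbb Z,\pi}qH$ we may write $f^*H\sim qH+\pi^*D$ for some Cartier divisor $D$ on $S$, and then $\pi_*f^*\mathcal O_X(mH)\cong R_{qm}\otimes\mathcal O_S(mD)$ by the projection formula. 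These maps respect multiplication, so they assemble into a graded homomorphism $f_S^*R\to R':=\bigoplus_{m\ge0}\bigl(R_{qm}\otimes\mathcal O_S(mD)\bigr)$. The algebra $R'$ is a Veronese twist of $R$, whence $\mathrm{Proj}_S R'\cong\mathrm{Proj}_S R=Y$ canonically, with $\mathcal O_{\mathrm{Proj}_S R'}(1)$ identified with $\mathcal O_Y(q)\otimes\pi_Y^*\mathcal O_S(D)$. The homomorphism therefore induces a rational map $f_Y\colon Y\dashrightarrow Y$ over $f_S$, characterised by $g\circ f=f_Y\circ g$ as rational maps and satisfying $f_Y^*\mathcal O_Y(1)\sim_{\mathbb Z,\pi_Y}q\,\mathcal O_Y(1)$. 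As $\mathcal O_Y(1)$ is $\pi_Y$-ample, this makes $f_Y$ polarized over $S$, and the square with $f_S$ commutes by construction --- the only thing left is that $f_Y$ is an honest morphism.

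To prove that $f_Y$ is everywhere defined I would argue as in the proof of Lemma \ref{induced mor-1}. Pick a projective birational morphism $\mu\colon W\to X$ from a normal projective variety such that $g_W:=g\circ\mu\colon W\to Y$ is a morphism, and, after replacing $W$, an algebraic fibre space; set $M:=g_W^*\mathcal O_Y(1)$ and $\pi_W:=\pi_Y\circ g_W$. The lifted rational self-map $f_W:=\mu^{-1}\circ f\circ\mu$ of $W$ satisfies $g_W\circ f_W=f_Y\circ g_W$ as rational maps, so $f_W^*M\sim_{\mathbb Z}qM+\pi_W^*D$. For a general curve $C$ in a general fibre of $g_W$, such a $C$ also lies in a fibre of $\pi_W=\pi_Y\circ g_W$; hence $\pi_W^*D\cdot C=0=M\cdot C$, so $M\cdot(f_W)_*C=f_W^*M\cdot C=0$, which says $g_W$ contracts $f_W(C)$. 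Therefore $g_W\circ f_W$ is constant along the fibres of the algebraic fibre space $g_W$ and factors through $g_W$, recovering $f_Y$; one then concludes that $f_Y$ is in fact a morphism by bounding the indeterminacy of $f_W$: its divisorial part is made of $\mu$-exceptional divisors and of divisors swept out by $g_W$-contracted curves, over which $g_W\circ f_W$ extends, while the residual indeterminacy has codimension $\ge2$ and, by a fibre-dimension count, contains no fibre of $g_W$. Finally $f_Y$ is finite and surjective, since $g\circ f$ is dominant and since a contracted curve would either lie in a fibre of $\pi_Y$, contradicting $\pi_Y$-ampleness of $\mathcal O_Y(1)$, or map to a curve contracted by $f_S$, contradicting that $f_S$ is polarized, hence finite.

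The crux is this last step --- that $f_Y$ is a genuine endomorphism and not merely a rational self-map. One cannot obtain it by lifting $f$ to a morphism on the resolution $W$, since endomorphisms generally do not lift even to minimal resolutions; the analysis must be carried out on $Y$ itself, controlling the indeterminacy of $f_Y$ through the fibre structure of $g_W$, and --- should $Y$ fail to be normal --- possibly after first replacing $Y$ by its normalisation, to which $f_Y$ lifts because it is finite. A secondary nuisance is the bookkeeping of the divisor $D$ pulled back from $S$: it is harmless for the statement as phrased (polarization over $S$), but disposing of it so that the scaling factor on a chosen ample divisor of $Y$ is exactly $q$ requires the Perron--Frobenius input of Lemma \ref{nz-lem}, applied to $f_S$.
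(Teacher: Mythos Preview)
Your core approach---functoriality of relative $\mathrm{Proj}$ applied to the graded pullback $f^*\colon R_\pi(X,H)\to R_\pi(X,qH)$---is exactly the paper's, and you are more scrupulous than the paper in tracking the twist by $\pi^*D$ and in asking whether the induced $f_Y$ is a morphism rather than merely a rational map. The paper's proof is two lines: it simply asserts that the graded algebra map yields a morphism $f_Y\colon Y\to Y$ and moves on. Your third paragraph, by contrast, tries to justify this via a resolution $W$ and a curve--contraction argument, but that argument is not complete: $f_W=\mu^{-1}\circ f\circ\mu$ is only rational, so the projection--formula identity $M\cdot (f_W)_*C=f_W^*M\cdot C$ and the subsequent ``bounding the indeterminacy'' step need more care than you give them, and you yourself flag this as the crux. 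In the intended application (steps of an MMP on a Mori dream space) the issue is easier than you make it, since one already knows that each step is a morphism or a small modification between normal $\mathbb{Q}$-factorial varieties.

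The substantive point you miss is the paper's handling of polarization. You stop at $f_Y^*\mathcal O_Y(1)\sim_{\pi_Y} q\,\mathcal O_Y(1)$ and then propose to invoke the Perron--Frobenius Lemma~\ref{nz-lem} on $f_S$ to upgrade this to an honest polarization. The paper instead uses the hypothesis $f_S^*H_S\sim qH_S$ directly: setting $H':=\mathcal O_Y(1)$ and $H_Y:=H'+l\,\pi_Y^*H_S$ for $l\gg 0$, one has $H_Y$ ample on $Y$ (since $H'$ is $\pi_Y$-ample and $H_S$ is ample) and $f_Y^*H_Y\sim qH_Y$. This is why the same scalar $q$ is assumed for both $f$ and $f_S$; no spectral argument is needed.
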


\begin{proof}
We have $$f^*:R_{\pi}(X,H) \to R_{\pi}(X,qH).$$ Then we obtain a morphism $f_Y:Y \to Y$ over $S$ satisfying  the diagram
$$ \begin{CD}
Y @>f_Y>> Y\\
@VVV @VVV\\
S @>f_S>> S
\end{CD}$$

commutes.
We show that  $f_Y$ is polarized.  Indeed let $H'$ be  the tautological bundle of  $\mathrm{Proj}_S\,R_{\pi}(X,mH).$ Then we see that $f_Y^*H' \sim_{\mathbb{Z}} qH'$. Thus  $H_Y:= H'+l \pi_Y^*H_S$ is ample on $Y$ for $l \gg 0. $ Since $f_Y^*H_Y \sim_{\mathbb{Z}} qH_Y$, $f_Y$ is also polarized. 
\end{proof}

 \begin{lem}\label{non van mds}Let $X$ be a Mori dream space admitting a non-trivial polarized endomorphism. Then $\kappa(-K_X) \geq 0$ if $K_X$ is $\mathbb{Q}$-Cartier.
 \end{lem}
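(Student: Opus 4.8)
The plan is to use the polarized endomorphism to produce a section of a power of $-K_X$, exploiting the fact that $f^*$ scales divisor classes uniformly (Lemma \ref{projective triviality}). First I would pass to a power of $f$ so that, by Lemma \ref{projective triviality}, there is an integer $m$ with $f^{m*} = q^m \cdot \mathrm{Id}$ on $\mathrm{NS}_{\mathbb{R}}(X)$; replacing $f$ by $f^m$ I may assume $f^* = q \cdot \mathrm{Id}$ on $\mathrm{NS}_{\mathbb{R}}(X)$ for some integer $q \geq 2$ (using Lemma \ref{nz-lem} and the fact that $f$ is non-trivial). Since $X$ is $\mathbb{Q}$-factorial with $\Pic(X)_{\mathbb{Q}} \simeq \mathrm{N}^1(X)_{\mathbb{Q}}$, this gives $f^*K_X \sim_{\mathbb{Q}} qK_X$, hence $f^*(-K_X) \sim_{\mathbb{Q}} q(-K_X)$.

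Next I would bring in the ramification formula. Because $f$ is finite and $X$ is normal (and $\mathbb{Q}$-Gorenstein by hypothesis on $K_X$), we have $K_X \sim_{\mathbb{Q}} f^*K_X + R_f$ where $R_f \geq 0$ is the ramification divisor. Combining with $f^*K_X \sim_{\mathbb{Q}} qK_X$ yields $(1-q)K_X \sim_{\mathbb{Q}} R_f$, i.e. $-K_X \sim_{\mathbb{Q}} \frac{1}{q-1} R_f$, which is an effective $\mathbb{Q}$-divisor. The key point is that $f$ is non-trivial and polarized, so $q \geq 2$ and this is genuinely a positive multiple; moreover $R_f$ is nonzero in general but even $R_f = 0$ would give $-K_X \sim_{\mathbb{Q}} 0$, which still yields $\kappa(-K_X) \geq 0$. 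Either way, $-K_X$ is $\mathbb{Q}$-linearly equivalent to an effective $\mathbb{Q}$-divisor, so $\kappa(-K_X) \geq 0$.

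The main obstacle to watch is the passage from numerical to $\mathbb{Q}$-linear equivalence: Lemma \ref{projective triviality} only gives $f^* = \alpha \cdot \mathrm{Id}$ on $\mathrm{NS}_{\mathbb{R}}(X)$, which is numerical equivalence, so I must invoke condition (i) of the Mori dream space definition ($\Pic(X)_{\mathbb{Q}} \simeq \mathrm{N}^1(X)_{\mathbb{Q}}$) to upgrade $f^* K_X \equiv q K_X$ to $f^* K_X \sim_{\mathbb{Q}} q K_X$; this is exactly where the Mori dream space hypothesis is used in an essential way. A secondary point is that the ramification divisor $R_f$ is defined using $K_X$ as a $\mathbb{Q}$-Cartier divisor, which is fine since we assume $K_X$ is $\mathbb{Q}$-Cartier, and the identity $K_X \sim_{\mathbb{Q}} f^* K_X + R_f$ holds because $f$ is étale in codimension one away from $\Supp R_f$ and both sides agree as $\mathbb{Q}$-Cartier divisors — one should be slightly careful that the Hurwitz-type formula is valid in the $\mathbb{Q}$-Gorenstein normal setting, but this is standard.
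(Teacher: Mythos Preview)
Your proof is correct and follows essentially the same route as the paper: invoke Lemma \ref{projective triviality} (together with the Mori dream space identification $\Pic(X)_{\mathbb{Q}}\simeq N^1(X)_{\mathbb{Q}}$) to obtain $f^{m*}K_X\sim_{\mathbb{Q}} lK_X$ with $l\geq 2$, then combine with the ramification formula $K_X=f^{m*}K_X+R$ to conclude $-K_X\sim_{\mathbb{Q}}\tfrac{1}{l-1}R\geq 0$. Your explicit remark on upgrading numerical to $\mathbb{Q}$-linear equivalence via condition (i) of Definition \ref{Mori dream space} is a welcome clarification that the paper leaves implicit.
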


\begin{proof}By Lemma \ref{projective triviality}, we can see that $f^{m*}K_X\sim_{\mathbb{Q}}lK_X$ for some integer $l \geq 2$ and $m \geq 1$. 
On the other hand, by the ramification formula, we have
$$K_X=f^{m*}K_X+R,
$$
where $R$ is the ramification divisor, in particular, it is effective.  Thus we conclude that $R\sim_{\mathbb{Q}} (1-l)K_X.$
\end{proof}

Finally we prove that  Conjecture \ref{main-rem} holds for Mori dream spaces.

\begin{thm}\label{cy mds}Let $X$ be a Mori dream space such that there exists a non--trivial polarized endomorphism. Then $X$ is of Calabi--Yau type.

\end{thm}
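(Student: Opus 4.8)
The plan is to reduce the statement, via the total coordinate space, to a local statement about singularities and then to run on that space the argument of \cite[Theorem\,1.1]{bh}. First note that, being $\mathbb{Q}$-factorial, $X$ is $\mathbb{Q}$-Gorenstein, so Theorem \ref{singularities} already gives that $X$ has log canonical singularities, and Lemma \ref{non van mds} gives that $-K_X$ is $\mathbb{Q}$-linearly equivalent to an effective divisor. Hence the only thing still missing is an effective $\Delta\sim_{\mathbb{Q}}-K_X$ for which the pair $(X,\Delta)$ is log canonical. The obvious candidate $\tfrac{1}{\alpha-1}R$ built from the ramification divisor $R$ of a suitable iterate $f^{m}$ (with $f^{m*}=\alpha\cdot\mathrm{Id}$ on $\NS_{\mathbb{R}}(X)$ by Lemma \ref{projective triviality}) has the right numerical class but no reason to be log canonical, so a genuinely different mechanism is needed.

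That mechanism is \cite[Theorem\,1.4]{gost}: for a Mori dream space $X$ the property of being of Calabi--Yau type is equivalent to the total coordinate space $\widehat X=\Spec\Cox(X)$ having at worst log canonical singularities. So it suffices to prove that $\widehat X$ is log canonical. To produce an endomorphism on $\widehat X$, I would replace $f$ by an iterate so that $f^{*}$ acts as multiplication by an integer $\alpha\ge 2$ on $\Pic(X)$ (Lemma \ref{projective triviality}, together with a further iterate to absorb the torsion of $\Pic(X)$); then the pullbacks $f^{*}\colon H^{0}(X,\mathcal{O}_X(D))\to H^{0}(X,\mathcal{O}_X(\alpha D))$ fit together into a finite graded ring endomorphism of $\Cox(X)$, hence a finite surjective non-invertible endomorphism $\widehat f\colon\widehat X\to\widehat X$ inducing $f$ on the quotient and scaling the grading (equivalently the torus weights) by $\alpha$, so that $\widehat f$ is \emph{expanding} towards the irrelevant locus.

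Now I would apply the method behind \cite[Theorem\,1.1]{bh} to $\widehat X$: if $(\widehat X,0)$ were not log canonical, take an lc modification $\mu\colon\widehat X'\to\widehat X$ following Odaka--Xu \cite{ox}, write $K_{\widehat X'}+E=\mu^{*}K_{\widehat X}$ with $E\ge 0$ controlling the non-lc locus, lift $\widehat f$ to $\widehat X'$, and use that the finite expanding map $\widehat f$ must preserve the (finitely many) lc centres and the non-lc locus while strictly increasing the relevant discrepancy-type quantities, exactly as in \cite{bh}, to reach a contradiction. This shows $\widehat X$ is log canonical, and \cite[Theorem\,1.4]{gost} then gives that $X$ is of Calabi--Yau type.

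The main obstacle is this last step: \cite[Theorem\,1.1]{bh} is stated for projective varieties carrying an honest polarized endomorphism, whereas $\widehat X$ is affine and only has a torus action. One therefore has to either construct a sufficiently nice equivariant projectivization of $\widehat X$ on which $\widehat f$ extends to a polarized endomorphism (so that the statements of \cite{bh}, or its local counterpart, apply directly), or else re-run the lc-modification and lc-centre analysis of \cite{bh} directly on $\widehat X$, the delicate point being that the expanding behaviour of $\widehat f$ near the irrelevant locus still forces the non-lc locus to be empty. Comparatively minor are the verification that $\widehat f$ genuinely lifts $f$ in the presence of torsion in $\Pic(X)$, and the bookkeeping needed to match the notion of $\Cox(X)$ being log canonical used in \cite{gost} with what the argument produces.
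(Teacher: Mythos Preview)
Your strategy---lift the endomorphism to $\widehat X=\Spec\Cox(X)$ and then argue that $\widehat X$ is log canonical via a \cite{bh}-type argument, invoking \cite[Theorem~1.4]{gost} to conclude---is genuinely different from the paper's, and it is conceptually attractive. However, the obstacle you yourself flag is a real gap, not a technicality. The key ingredient in \cite{bh} is not just the existence of a finite self-map, but the polarization: it is the interplay with an ample divisor that forces the lc-modification argument and the lc-centre bookkeeping to terminate. On the affine cone $\widehat X$ you have a $\mathbb{G}_m^{\rho}$-action and a finite expanding self-map towards the irrelevant ideal, but no polarization; turning this into the input needed for \cite{bh} (or for \cite{BdFF,Yuchen} on the local side) would require either a careful equivariant projective compactification on which the endomorphism extends and remains polarized, or a genuinely new local version of \cite{bh}. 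Neither is in the literature, and neither is carried out in your sketch, so as written the proof is incomplete at its crucial step.

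The paper avoids this by never leaving the projective setting. It runs a $(-K_X)$-MMP (available by Proposition~\ref{mmp on mds}) to reach an anti-minimal model $Y$; by Lemmas~\ref{projective triviality}, \ref{induced mor-1} and \ref{induced mor-2} the polarized endomorphism descends to $Y$, so Theorem~\ref{singularities} gives that $Y$ is log canonical, and Lemma~\ref{non van mds} together with the Mori dream space property gives that $-K_Y$ is semi-ample. From there the arguments of \cite[Theorem~1.5]{gost} (not~1.4) pull the Calabi--Yau structure back from $Y$ to $X$. The point is that by passing to the anti-minimal model first, the paper places itself in a situation where \cite{bh} applies on the nose and where the Calabi--Yau boundary can be read off from semi-ampleness of $-K_Y$; your route tries instead to prove log canonicity of the Cox ring directly, which is a harder and, as you note, currently unavailable statement.
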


\begin{proof}Running a $(-K_X)$-MMP, we have an anti-minimal model $Y$
  of $X$. By Lemmas \ref{projective triviality}, \ref{induced mor-1} and \ref{induced mor-2}, $Y$
  has a non-trivial polarized endomorphism. Thus $Y$ has log canonical
  singularities by Theorem \ref{singularities} and $-K_Y$ is semi-ample by Lemma \ref{non van mds}. By the arguments of \cite[Theorem 1.5]{gost}, $X$ is of Calabi--Yau type. 

\end{proof}

\section{ \'Etale  in codimension one case}\label{codim 1 etale}
We see that Conjecture \ref{cy conj} is true when $f$ is \'etale  in codimension one and $X$ has $\mathbb{Q}$-Gorenstein singularities. Note that \cite[Theorem 1.1]{Yuchen} implies that the following Proposition holds for smooth projective varieties:
\begin{prop}\label{etale in codim one}
Let $X$ be a normal $\mathbb{Q}$-Gorenstein projective variety such that there exists a non-trivial endomorphism with  \'etale  in codimension one such that $f^*H=qH$ for some nef and big $\mathbb{R}$-Cartier divisor and some $q \in \mathbb{R}$. Then $X$ is  log canonical and $K_X\sim_{\mathbb{Q}}0 $ 
\end{prop}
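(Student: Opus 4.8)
The plan is to establish, in turn, that the ramification divisor of $f$ vanishes, that $K_X\equiv 0$, that $(X,0)$ is log canonical, and finally that $K_X\sim_{\mathbb{Q}}0$. Since $f$ is \'etale in codimension one, every prime divisor of $X$ has ramification index one under $f$, so the ramification divisor $R$ is $0$; as $K_X$ is $\mathbb{Q}$-Cartier the ramification formula becomes $f^{*}K_X\sim_{\mathbb{Q}}K_X$, hence $f^{*}[K_X]=[K_X]$ in $N^1(X)_{\mathbb{R}}$. Now $f$ is quasi-polarized by the nef and big divisor $H$ and is non-trivial, so $\deg f\ge 2$ and therefore $q=(\deg f)^{1/n}>1$ by Lemma \ref{nz-lem}; that same lemma shows that every eigenvalue of $f^{*}\colon N^1(X)_{\mathbb{R}}\to N^1(X)_{\mathbb{R}}$ has absolute value $q\neq 1$. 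Since $[K_X]$ is an eigenvector with eigenvalue $1$, it must vanish, that is, $K_X\equiv 0$.

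Next one must show that $(X,0)$ is log canonical. I would deduce this from Theorem \ref{singularities}: although it is stated for polarized endomorphisms, its proof --- through the lc modification of Odaka--Xu and the analysis of lc centres in \cite{bh} --- uses only that $f^{*}$ multiplies a fixed nef and big class by a scalar, so it should apply verbatim to the quasi-polarized $f$ at hand. Alternatively, one can argue locally: the non-log-canonical locus of $(X,0)$ is a closed, $f$-stable subset (this follows from the behaviour of discrepancies under a finite morphism that is \'etale in codimension one, exactly as in the computation above, since $R=0$), and on it one applies the affirmative answer to Question \ref{yz-ask} in the $\mathbb{Q}$-Gorenstein case due to Boucksom--de Fernex--Favre, using that a quasi-polarized $f$ has a dense set of periodic points. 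Checking that the available log canonicity statements genuinely cover the quasi-polarized rather than merely the polarized case is the step I expect to need the most care.

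Finally, with $(X,0)$ log canonical, $K_X\equiv 0$ and $X$ projective, the abundance theorem for numerically trivial log canonical divisors yields $K_X\sim_{\mathbb{Q}}0$ (concretely, $\kappa(X,K_X)=0$, so $mK_X\sim D$ for some $m>0$ with $D$ effective, and $D\equiv 0$ on a projective variety forces $D=0$, whence $mK_X\sim 0$). One could also bypass abundance altogether: $f^{*}$ fixes the class of $\mathcal{O}_X(mK_X)$ in $\Pic^{\tau}(X)$, and since the eigenvalues of $f^{*}$ on $H^1(X,\mathcal{O}_X)=\mathrm{Lie}\,\Pic^0(X)$ are bounded away from $1$, $f^{*}-\mathrm{id}$ is an isogeny of $\Pic^0(X)$, which forces $\mathcal{O}_X(mK_X)$ to be a torsion line bundle.
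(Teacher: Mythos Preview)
Your argument is correct and follows the paper's proof essentially line for line: from \'etale in codimension one deduce $f^{*}K_X=K_X$, use Lemma~\ref{nz-lem} with $q>1$ to conclude $K_X\equiv 0$, invoke Theorem~\ref{singularities} for log canonicity, and finish with the abundance theorem for numerically trivial log canonical divisors. Your care about the polarized versus quasi-polarized hypothesis in Theorem~\ref{singularities} is well placed---the paper simply cites it without comment---and your alternative arguments for steps three and four are sound but unnecessary extras.
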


\begin{proof}Since $f$ is \'etale  in codimension one, we have 
$$K_X=f^*K_X.
$$
Thus we have $K_X \equiv 0$ by Lemma \ref{nz-lem} and that $q>1$. On the other hand, log-canonicity follows from Theorem \ref{singularities}. Moreover we have 
$$K_X\sim_{\mathbb{Q}}0 $$ by \cite[Theorem 1.2]{gongyo-abund}, \cite[Theorem 7]{kawamata-abunnuzero}, or \cite[Theorem 1]{ckp-num}.
\end{proof}

\section{Surfaces}\label{surfaces}

In this Section, we give a proof of Conjecture \ref{cy conj} for surfaces:

\begin{thm}\label{thm:surface}Let $X$ be a normal projective surface such that there exists a non trivial polarized endomorphism. Then $X$ is of Calabi--Yau type.
\end{thm}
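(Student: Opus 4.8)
The plan is to split the proof according to the anti-Kodaira dimension $\kappa(X, -K_X)$, which makes sense by Theorem \ref{singularities-char p}: any normal projective surface with a non-trivial polarized endomorphism is $\mathbb{Q}$-Gorenstein and log canonical, so $K_X$ is $\mathbb{Q}$-Cartier and one may run an MMP. First I would record the basic structural input: if $f\colon X\to X$ is the polarized endomorphism with $f^*H\sim_{\mathbb Q} qH$, $q\ge 2$, then by the ramification formula $K_X = f^*K_X + R$ with $R\ge 0$ effective, and by Lemma \ref{nz-lem} every eigenvalue of $f^*$ on $N^1(X)_{\mathbb R}$ has modulus $q$; in particular $\kappa(X, -K_X)\ge 0$ whenever $R$ is not numerically forced to vanish, and $-K_X$ is pseudo-effective in all cases (since $f^{m*}(-K_X) \equiv q^m(-K_X) + (\text{effective})$ after iterating). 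The goal is always to produce an effective $\Delta$ with $K_X+\Delta\sim_{\mathbb Q}0$ and $(X,\Delta)$ log canonical.

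\medskip

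The case $\kappa(X,-K_X)=2$, i.e.\ $-K_X$ big: here I would run a $(-K_X)$-MMP (legitimate since $X$ is a surface with lc, hence $\mathbb{Q}$-Gorenstein, singularities) to reach an anti-minimal model; arguing as in the Mori dream space case via Lemmas \ref{induced mor-1} and \ref{induced mor-2} the endomorphism descends, so one may assume $-K_X$ is nef and big, then semi-ample by base-point-freeness, and the induced anticanonical model is a log del Pezzo-type surface for which being of Calabi--Yau type (equivalently the existence of a good ladder / general member of $|-mK_X|$ being suitably singular) is known; then pull the boundary back. The case $\kappa(X,-K_X)=1$: run the MMP to make $-K_X$ nef, so $-K_X$ is semi-ample and defines a fibration $\pi\colon X\to C$ over a curve with $-K_X\sim_{\mathbb Q}\pi^*A$, $A$ ample on $C$; by Lemma \ref{induced mor-1} (applied with $H=-K_X$) $f$ descends to a polarized endomorphism of $C$, hence $C\cong\mathbb P^1$ and the general fibre $F$ has $-K_F$ trivial or ample, i.e.\ $F$ is a fibre of an elliptic or conic-bundle-type fibration; then I would build $\Delta$ from vertical fibres, invoking Remark \ref{curve} on the base to get $K_C+\Delta_C\sim_{\mathbb Q}0$ with $(C,\Delta_C)$ lc and using a canonical bundle formula argument so that the horizontal/vertical contributions assemble into an lc $\Delta$ on $X$ with $K_X+\Delta\sim_{\mathbb Q}0$. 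The case $\kappa(X,-K_X)=0$: then $-K_X$ is nef after MMP with $-K_X\equiv 0$, and by the ramification identity $R\sim_{\mathbb Q}(1-l)K_X\equiv 0$ forces $R=0$, so $f$ is \'etale in codimension one; then Proposition \ref{etale in codim one} applies directly and gives $K_X\sim_{\mathbb Q}0$ with $X$ lc, so $\Delta=0$ works. Finally the case $\kappa(X,-K_X)=-\infty$ must be excluded: since $-K_X$ is always pseudo-effective (as noted above, from iterating the ramification formula), $\kappa(X,-K_X)=-\infty$ cannot occur on a surface once we know $X$ is $\mathbb{Q}$-Gorenstein; I would give this as a short lemma, perhaps after an MMP argument showing any $K_X$-negative output still carries a descended polarized endomorphism and a Mori fibre space over $\mathbb P^1$ or a point, where $-K_X$ is big or nef.

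\medskip

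The main obstacle I expect is the $\kappa(X,-K_X)=1$ case: arranging the boundary divisor on the total space so that it is simultaneously lc, effective, and $\mathbb{Q}$-linearly trivial against $K_X$ requires controlling the vertical part of the ramification/discriminant, i.e.\ making the canonical bundle formula for $\pi\colon X\to\mathbb P^1$ interact correctly with the ramification of $f_{\mathbb P^1}$ on the base; one has to check that the multiplicities of the vertical fibres one adds do not exceed $1$, which is where log canonicity of the constructed pair could fail and where the hypothesis that $f$ is polarized (not merely a non-trivial endomorphism) is genuinely used. A secondary difficulty is the descent of the endomorphism across the steps of a $(-K_X)$-MMP when the extremal contraction is not $K_X$-negative in the usual sense; here Lemmas \ref{induced mor-1} and \ref{induced mor-2}, together with the finiteness of the relevant cones on a surface, should suffice, but the bookkeeping of which semi-ample class is preserved needs care. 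The remaining cases I expect to be essentially formal given Theorems \ref{singularities-char p}, \ref{singularities}, Proposition \ref{etale in codim one}, and the surface anticanonical model theory.
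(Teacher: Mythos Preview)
Your outline has the right spirit but contains genuine gaps, and the route differs substantially from the paper's. First, descent of $f$ through a $(-K_X)$-MMP is not justified by Lemmas \ref{induced mor-1} and \ref{induced mor-2}: those treat semi-ample divisors and Proj constructions, not arbitrary extremal contractions. What is actually needed is that, after replacing $f$ by an iterate, $f^*$ acts as a scalar on $N^1(X)_{\mathbb Q}$; this is Zhang's result (Lemma \ref{zhang's surface lem}, valid when $K_X\not\equiv 0$), and it is the real mechanism making every contraction $f$-equivariant. Lemma \ref{nz-lem} alone ($|\lambda|=q$) does not give this. Second, in your $\kappa(-K_X)=1$ case the claim ``hence $C\cong\mathbb P^1$'' is false: elliptic curves admit non-trivial polarized endomorphisms, so an elliptic base is not excluded, and indeed this case requires a separate canonical bundle formula argument. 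Third, your $\kappa(-K_X)=2$ case ends with an appeal to ``known'' results for log del Pezzo surfaces; this is where the content lies and cannot be waved away.

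The paper proceeds differently. It first isolates rational surfaces with only rational singularities (Theorem \ref{thm:surface-rational-rational}) and there splits by $\kappa(-K_S)$: for $\kappa=2$ it shows $S$ is a Mori dream space via \cite{tvav} and invokes Theorem \ref{cy mds}; for $\kappa\le 1$ it takes the Zariski decomposition $-K_S\sim_{\mathbb Q}P+N$, uses Zhang's lemma to make the components of $N$ totally invariant, and applies \cite[Corollary 3.3]{bh} to get $(S,N_{\mathrm{red}})$ lc, hence $(S,N)$ lc with $P$ semi-ample. For the general surface it runs a $K_S$-MMP (not $-K_S$) to a Mori fiber space $S\to S'\to C$, equivariant by Zhang's lemma, and splits on $\dim C$: if $C\cong\mathbb P^1$, or $C$ is a point and $S$ is rational, vanishing plus Lemma \ref{lem-rational} force rational singularities on $S$, reducing to the previous case; if $C$ is elliptic, a canonical bundle formula for $(S',\tfrac{1}{q-1}R')$ shows this pair is log Calabi--Yau; if $C$ is a point and $S$ is non-rational, $S'$ is a cone over an elliptic curve and one argues directly that $S\cong S'$. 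Your anti-Kodaira-dimension split does not see these distinctions and in particular never addresses the non-rational-singularity cases.
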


Before starting a proof, we discuss a classification of surfaces
admitting non-trivial endomorphisms after Fujimoto and
Nakayama. Actually those surfaces are classified up to regular
equivalence when they are smooth.






We begin by two results about non-rational smooth surfaces.

\begin{prop}[{\cite[Proposition 2.3.1]{swzhang}}]\label{bundle over elliptic curve}Let $\mathbb{P}_C(E)$ be a $\mathbb{P}^1$-bundle over an elliptic curve $C$. Then $\mathbb{P}_C(E)$ has a non-trivial polarized endomorphism if and only if $E=\mathcal{O}_C \oplus M$ with $M$ semi-ample or $E$ is indecomposable of odd degree. 

\end{prop}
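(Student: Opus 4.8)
The plan is to reduce the problem to the structure of the $\mathbb{P}^1$-bundle relative to $C$ and to analyze surjective endomorphisms through the Albanese map. Since $C=\mathrm{Alb}(\mathbb{P}_C(E))$, any surjective endomorphism $f$ of $S:=\mathbb{P}_C(E)$ fits into a commutative square with the bundle projection $\pi\colon S\to C$ and an induced surjective endomorphism $g\colon C\to C$, which is an isogeny followed by a translation. First I would record the action of $f^*$ on $N^1(S)_{\mathbb{R}}=\mathbb{R}F\oplus\mathbb{R}\xi$, where $F$ is the fibre class and $\xi=\mathcal{O}_{\mathbb{P}(E)}(1)$: one has $f^*F=(\deg g)F$ and $f^*\xi=d\,\xi+(\text{a multiple of }F)$, where $d$ is the degree of the map induced by $f$ on a general fibre $\mathbb{P}^1$. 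Hence $f^*$ is triangular with eigenvalues $\deg g$ and $d$, and Lemma \ref{nz-lem} forces $\deg g=d=q$. In particular a non-trivial polarized $f$ has fibrewise degree $d=q\geq 2$.

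Next I would reinterpret the existence of such an $f$ as a statement about $E$. Factoring $f$ through the fibre product $S\times_{C,g}C=\mathbb{P}_C(g^*E)$, the map $f$ is determined by a $C$-morphism $\tilde f\colon\mathbb{P}_C(E)\to\mathbb{P}_C(g^*E)$ of fibrewise degree $d$. Such a morphism corresponds to a base-point-free rank-two relative pencil, i.e. to a bundle homomorphism $g^*E\to\mathrm{Sym}^d E\otimes N$ for some $N\in\mathrm{Pic}(C)$ that is fibrewise non-degenerate (defines a genuine degree-$d$ covering $\mathbb{P}^1\to\mathbb{P}^1$). Thus the question becomes: for which $E$ can one choose an isogeny $g$ and a line bundle $N$ so that $g^*E$ embeds as a fibrewise base-point-free pencil inside $\mathrm{Sym}^d E\otimes N$? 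To answer this I would compute both $g^*E$ and $\mathrm{Sym}^d E$ using Atiyah's classification of vector bundles over an elliptic curve, recalling that $[n]^*$ acts by $n$ on $\mathrm{Pic}^0(C)$ and by $n^2$ on degrees, and that $\mathrm{Sym}^d F_2\cong F_{d+1}$, where $F_r$ denotes the indecomposable unipotent (Atiyah) bundle of rank $r$ and degree $0$.

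With these computations in hand the three cases separate cleanly. If $E=\mathcal{O}_C\oplus M$, then $\mathrm{Sym}^d E=\bigoplus_{i=0}^d M^i$ contains the base-point-free pencil $\mathcal{O}_C\oplus M^d$ realizing the fibrewise $d$-th power map onto $\mathbb{P}_C(\mathcal{O}_C\oplus M^d)$; this equals $\mathbb{P}_C(g^*E)=\mathbb{P}_C(\mathcal{O}_C\oplus g^*M)$ exactly when $M^d\cong g^*M$ or $M^d\cong(g^*M)^{-1}$ up to twist. For $\deg M>0$ (so $M$ is ample, in particular semi-ample) this is solvable by a suitable $[n]$, and likewise for torsion $M$ of degree $0$, whereas for $M\in\mathrm{Pic}^0(C)$ non-torsion the relation $M^{d}\cong M^{\pm n}$ (with $d=n^2$) never holds --- this is precisely the failure of semi-ampleness. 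For $E$ indecomposable of odd degree one may, after twisting, present $\mathbb{P}_C(E)$ via a $2$-isogeny $\nu\colon\widetilde C\to C$ (the odd-degree indecomposable bundle being a pushforward $\nu_*L$); endomorphisms of $\widetilde C$ together with fibrewise power maps then descend to polarized endomorphisms. Finally, for $E$ indecomposable of even degree one reduces to $E=F_2$, and here $g^*F_2\cong F_2$ while $\mathrm{Sym}^d F_2\cong F_{d+1}$; the unique sub-line bundle $\mathcal{O}_C\subset F_2$ and the unipotent flag on $F_{d+1}$ force any homomorphism $F_2\to F_{d+1}\otimes N$ to have a common fibrewise zero, so the associated pencil acquires a base point and the fibrewise degree drops below $d$. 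Hence no polarized endomorphism of fibrewise degree $\geq 2$ exists, completing the ``only if'' direction.

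I expect the main obstacle to be the even-degree case: proving rigorously that the unipotent structure of $F_2$, propagated to $\mathrm{Sym}^d F_2=F_{d+1}$, obstructs every base-point-free relative pencil, so that $\mathbb{P}_C(F_2)$ admits no fibrewise-degree-$\geq 2$ self-map. The dual task of writing down the explicit polarized endomorphism in the odd-degree case through the $2$-isogeny (bielliptic) presentation, and the careful bookkeeping of the $\mathrm{Pic}^0$-versus-degree action of $[n]$ needed to isolate the torsion condition on $M$, are the other delicate points; the remaining verifications (the eigenvalue computation and the relative-pencil dictionary) are routine.
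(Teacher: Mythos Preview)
The paper gives no proof of this proposition; its entire argument is the sentence ``See \cite[Proposition 2.3.1]{swzhang}.'' There is thus nothing in the paper to compare your proposal against. Your sketch is effectively a reconstruction of the proof in the cited reference (see also \cite{nakayama-ruled}), and its architecture --- Albanese descent to $C$, the eigenvalue computation on $N^1(S)_{\mathbb{R}}$ via Lemma~\ref{nz-lem} forcing $\deg g=d=q$, translation into a relative-pencil problem inside $\mathrm{Sym}^d E$, and the case split through Atiyah's classification --- is correct.

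On the points you yourself flag as delicate: in the non-torsion degree-zero case you tacitly take $g=[n]$, which is automatic for non-CM curves (every isogeny is $[n]$ up to translation, and translations act trivially on $\mathrm{Pic}^0$) and needs only a line for CM curves (the dual isogeny still has degree $q\geq 2$, giving the same obstruction). For $F_2$, a clean way to run your obstruction is to note $h^0(\mathrm{Sym}^m F_2)=h^0(F_{m+1})=1$ for all $m$, so the canonical section $C_0$ is the only effective curve in its numerical class; a polarized $f$ would then satisfy $f^*C_0=qC_0$ as divisors, forcing total ramification along $C_0$ in every fibre and hence, by Riemann--Hurwitz on each fibre, a second totally-invariant section disjoint from $C_0$ --- but any section disjoint from $C_0$ would have to be numerically equivalent to $C_0$ (since $C_0^2=0$), contradicting uniqueness. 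With these details filled in, your proposal goes through.
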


\begin{proof}See \cite[Proposition 2.3.1]{swzhang}.

\end{proof}

In particular, in the above two cases, the log anti-canonical divisor with some boundary  is semi-ample:

\begin{prop}[cf. {\cite[Proposition\,1.6]{bp}}]\label{complement}Let $S=\mathbb{P}_C(E)$ be a $\mathbb{P}^1$-bundle over an elliptic curve $C$. If  $S$ has a non-trivial polarized endomorphism, then $-(K_S+Z)$ is semi-ample for some (possibly zero) divisor  $Z$  such that $(S,Z)$ is plt.
\end{prop}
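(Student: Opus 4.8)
The plan is to split into the two cases given by Proposition \ref{bundle over elliptic curve} and exhibit in each case an explicit (possibly zero) boundary $Z$ with the desired properties.

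\textbf{Case $E = \mathcal{O}_C \oplus M$ with $M$ semi-ample.} First I would take $Z = 0$ and show $-K_S$ is already semi-ample. Writing $S = \mathbb{P}_C(\mathcal{O}_C\oplus M)$, there are two disjoint sections $C_0$ (the one with $\mathcal{O}_{\mathbb{P}}(C_0) = \mathcal{O}_{\mathbb{P}}(1)\otimes \pi^*\mathcal{O}_C$ corresponding to the quotient $\mathcal{O}_C$) and $C_\infty$ corresponding to the quotient $M$, with $C_0 + C_\infty \in |\mathcal{O}_{\mathbb{P}}(2)\otimes \pi^*\mathcal{L}|$ for an appropriate line bundle $\mathcal{L}$; the relative canonical formula gives $-K_S \sim 2C_\sigma + \pi^*(-K_C - \det E + \text{correction})$, and since $C$ is elliptic $K_C \sim 0$, so $-K_S$ is a sum of a section class and the pullback of a semi-ample (indeed, up to choices, effective) class on $C$. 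The key point is that semi-ampleness of $M$ makes the section $C_0$ (or a multiple) movable along $S$, and I would check on the two extremal rays of $\overline{NE}(S)$ — the fiber class $F$ and the negative section — that $-K_S$ is nef, then invoke the fact that a nef divisor on a surface with $\kappa \geq 0$ and trivial intersection behaviour forces base-point-freeness, or more simply quote that $|-mK_S|$ is base point free here by a direct inspection of the linear system using $R^1\pi_* = 0$ and relative base-point-freeness. Then $(S,0)$ is plt since $S$ is smooth.

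\textbf{Case $E$ indecomposable of odd degree.} Here $-K_S$ need not be semi-ample, so I would choose $Z$ to be (a suitable multiple of) the unique section $C_0$ of minimal self-intersection, which is irreducible and smooth, hence $(S, Z)$ is automatically plt for $Z = C_0$ (a smooth divisor on a smooth surface). On an indecomposable bundle of odd degree over an elliptic curve the section $C_0$ has $C_0^2 = \pm 1$ (after normalizing the twist), and $-K_S \sim 2C_0 + \pi^*(\text{class of degree } -C_0^2)$ by the relative canonical bundle formula with $K_C \sim 0$. Choosing $Z = C_0$ gives $-(K_S + C_0) \sim C_0 + \pi^*(\text{a divisor of degree }1)$ on $C$; I would verify this class is nef by intersecting with $F$ (giving $1 > 0$) and with $C_0$ itself (giving $C_0^2 + 1 = 0$ or $2$ depending on sign convention — this is where the odd-degree/indecomposable hypothesis is used, to land in the nef chamber), and then semi-ample because the only curve on which it has degree zero is $C_0$, which contracts to a point, so by Zariski decomposition / the base-point-free theorem for surfaces it is semi-ample.

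\textbf{Main obstacle.} The delicate step is the second case: pinning down the numerical class of $K_S$ and of the minimal section precisely enough (with the right normalization of the projective bundle) to conclude that $-(K_S + C_0)$ lands in the semi-ample cone rather than merely the pseudo-effective cone, and confirming that no further boundary is needed. I expect this reduces to the standard intersection theory on ruled surfaces over elliptic curves (Hartshorne, V.2, together with Atiyah's classification of bundles on elliptic curves), but it must be checked that indecomposability of odd degree — as opposed to the decomposable or even-degree indecomposable cases, which behave differently — is exactly what guarantees nefness and the contractibility of the single degree-zero curve. One can also simply cite \cite[Proposition 1.6]{bp} for the existence of such a $Z$ once the classification of endomorphisms on these surfaces is in hand, and restrict the work here to recording that the boundary produced there is plt, which it is because it is supported on disjoint smooth sections.
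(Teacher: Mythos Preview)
Your case analysis is essentially inverted relative to the geometry, and this causes genuine failures in both branches.

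\textbf{Decomposable case.} When $E=\mathcal O_C\oplus M$ with $M$ ample of degree $d>0$, the anticanonical divisor is \emph{not} nef, so your choice $Z=0$ cannot work. Concretely, with $\xi=c_1(\mathcal O_{\mathbb P(E)}(1))$ one has $-K_S\equiv 2\xi-dF$, while the negative section $C_-$ (corresponding to the quotient $E\twoheadrightarrow\mathcal O_C$) satisfies $C_-\equiv \xi-dF$ and $C_-^2=-d$; hence $-K_S\cdot C_-=-d<0$. The paper handles this sub-case by taking $Z=C_-$: then $-(K_S+Z)\equiv\xi$ is nef and big with $\xi|_{C_-}\sim 0$, so semi-ampleness follows from the base point free theorem. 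Your argument only goes through in the torsion sub-case ($\deg M=0$), which the paper treats separately via an \'etale base change to $\mathbb P^1\times C'$.

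\textbf{Indecomposable odd degree case.} Here there is \emph{no} negative section: in Hartshorne's normalization $e=-1$, so the minimal section has $C_0^2=+1$. Your proposed boundary $Z=C_0$ gives $-(K_S+C_0)\equiv C_0-F$, which has $(C_0-F)^2=-1<0$ and is therefore not nef, let alone semi-ample. (Your hedged ``$C_0^2=\pm1$'' reflects exactly this confusion.) The paper instead takes $Z=0$: since $E$ is semi-stable, $-K_S$ is nef; it is neither big nor numerically trivial, and $h^0(-2K_S)\ge 2$ by Riemann--Roch, so $|-2K_S|$ defines an elliptic fibration and $-K_S$ is semi-ample.

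In short, the boundary is needed in the decomposable ample case and \emph{not} in the indecomposable odd case --- the opposite of what you proposed. The fix is straightforward once you recompute the intersection numbers with the correct value of $e$ in each case.
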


\begin{proof}By Proposition \ref{bundle over elliptic curve}, we see
  $E =\mathcal{O}_C \oplus M$ with $M$ semi-ample or $E$ is
  indecomposable of odd degree.  First we show Proposition
  \ref{complement} for the indecomposable $E$. Then $E$ is a semi-stable
  vector bundle (cf. \cite{atiyah}, \cite[V., Theorem 2.15 and
  Exercise 2.8]{hartshorne}). Thus $-K_S$ is nef by \cite{ns unitary}
  or \cite[Theorem 3.1]{Mi1} and never big nor numerically
  trivial. Thus $-K_S$ generates the extremal ray of the nef cone
  which is deferent from the fibre of $S \to C$. On the other hand,
  the linear system $|-2K_S|$ is a pencil by the Riemann--Roch
  theorem. Thus $S$ has an elliptic fibration and $-K_S$ defines this
  fibration. Thus $-K_S$ is semi-ample. In the case of   $E
  =\mathcal{O}_C \oplus M$ with $M$ ample, let  $Z$ be the negative
  section $Z^2=-\mathrm{deg}\,M$. Then we can see that $-(K_S+Z)$ is nef and big, and $(S, Z)$ is plt. Moreover $-(K_S+Z)|_Z=-K_Z$ is trivial. Thus $-(K_S+Z)$ is semi-ample by the base point free theorem \cite[Theorem 1.1]{fujino-kawamata}. In the last part of    $E =\mathcal{O}_C \oplus M$ with $M$ torsion, take  the \'etale morphism $C'\to C$ corresponding to $M$. Then by the base change  of $S$  by $C'\to C$ is isomorphic to $\mathbb{P}^1 \times C' $. Moreover  $\mathbb{P}^1 \times C'  \to S$ is also \'etale. Thus $-K_S$ is semi-ample since so is $-K_{\mathbb{P}^1 \times C' }$.

\end{proof}

 Thus we show the following by using Fujimoto--Nakayama's classification.
 
 \begin{cor}\label{non-rational case}
Let  $S$ be a smooth  projective surface admitting a non-trivial polarized endomorphism. Then $S$ is of Calabi--Yau type.
 \end{cor}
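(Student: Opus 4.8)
The plan is to reduce Corollary \ref{non-rational case} to the non-rational and rational cases separately, invoking Fujimoto--Nakayama's classification \cite{f1} of smooth surfaces admitting non-trivial endomorphisms. First I would dispose of the non-rational case: if $S$ is a smooth projective surface admitting a non-trivial polarized endomorphism and $S$ is not rational, then the classification forces $S$ to be (up to \'etale covers and the structure results in \cite{f1}) either an abelian surface, a hyperelliptic surface, a toric-like surface, or a $\mathbb{P}^1$-bundle over an elliptic curve. In the abelian and hyperelliptic cases $K_S\sim_{\mathbb{Q}}0$, so $(S,0)$ is already log Calabi--Yau. In the $\mathbb{P}^1$-bundle over an elliptic curve case, Proposition \ref{complement} produces a divisor $Z$ with $(S,Z)$ plt and $-(K_S+Z)$ semi-ample; taking a general member $D\in|-m(K_S+Z)|$ for $m\gg0$ divisible and setting $\Delta=Z+\tfrac1m D$ gives $K_S+\Delta\sim_{\mathbb{Q}}0$ with $(S,\Delta)$ log canonical (indeed the semi-ampleness and generality of $D$ preserve plt-ness away from $\mathrm{Supp}\,Z$, and a Bertini argument handles the rest). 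That is exactly the assertion that $S$ is of Calabi--Yau type.

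Next I would handle the remaining non-rational possibilities, namely surfaces fibred over a curve of genus $\ge 1$ with no non-trivial endomorphism preserving fibrewise structure unless the fibres are elliptic or the total space is covered by a product; in each of these the endomorphism being polarized is a strong restriction, and by Lemma \ref{nz-lem} together with \cite[Theorem 3]{nakayama-ruled} one reduces to the cases already treated or to ruled surfaces over an elliptic base, which is Proposition \ref{complement}. The only genuinely new input here beyond the cited classification is the bookkeeping that in every branch of the Fujimoto--Nakayama list the surface either has numerically trivial canonical class or is of the form covered by Proposition \ref{complement}; this is a finite case check and I would present it as such, citing \cite{f1} and \cite{nakayama-ruled} for the list.

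For the rational case (which is presumably the content of the following section rather than this corollary, but which must be addressed for the corollary to be complete) I would note that by Theorem \ref{singularities-char p} and its smooth specialization $S$ is automatically $\mathbb{Q}$-Gorenstein, so Lemma \ref{non van mds}-type reasoning applies once we know $S$ behaves like a Mori dream space — and a smooth rational surface is a Mori dream space precisely when $-K_S$ is big, which by Lemma \ref{nz-lem} and the ramification formula $K_S=f^{m*}K_S+R$ (with $f^{m*}$ acting by an eigenvalue of modulus $q^2>1$ on the relevant part of $N^1(S)$) forces $\kappa(-K_S)\ge0$; then Theorem \ref{cy mds} applies, or one argues directly by running a $(-K_S)$-MMP, descending the endomorphism via Lemmas \ref{induced mor-1} and \ref{induced mor-2}, and concluding on the anti-minimal model.

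The main obstacle I expect is not any single step but the coherence of the reduction: endomorphisms do not lift to minimal resolutions, so one cannot blithely pass between $S$ and its models, and in the rational case one must be careful that the $(-K_S)$-MMP steps are compatible with the polarized structure (this is precisely why Lemmas \ref{induced mor-1}, \ref{induced mor-2} and \ref{projective triviality} were set up). Concretely, the delicate point is verifying that after running the MMP the anti-minimal model $Y$ still carries a \emph{non-trivial} polarized endomorphism — triviality could in principle be destroyed — and that $-K_Y$ semi-ample pulls back to give the Calabi--Yau boundary on $S$ via the argument of \cite[Theorem 1.5]{gost}. I would flag this as the place where care is needed and otherwise treat the case analysis as routine given the cited classification.
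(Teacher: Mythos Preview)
Your treatment of the non-rational case is essentially what the paper does: reduce via the classification to $\mathbb{P}^1$-bundles over an elliptic curve (or to surfaces with $K_S\sim_{\mathbb{Q}}0$), apply Proposition~\ref{complement}, and use Bertini to produce the log Calabi--Yau boundary from the semi-ample $-(K_S+Z)$.

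The rational case, however, has a genuine gap. You argue that the ramification formula together with Lemma~\ref{nz-lem} yields $\kappa(-K_S)\ge 0$, and then claim that this makes $S$ a Mori dream space because ``a smooth rational surface is a Mori dream space precisely when $-K_S$ is big''. But $\kappa(-K_S)\ge 0$ does not force $-K_S$ to be big, so you cannot conclude that $S$ is a Mori dream space this way, and Theorem~\ref{cy mds} is not available. Your fallback of running a $(-K_S)$-MMP and descending the endomorphism via Lemmas~\ref{induced mor-1} and~\ref{induced mor-2} presupposes exactly the Mori dream space structure you have not established (note also that Lemma~\ref{non van mds} itself assumes $X$ is a Mori dream space, so invoking it here is circular).

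The paper sidesteps all of this with a single citation you mention but do not exploit: by \cite[Theorem~3]{nakayama-ruled}, a smooth rational projective surface admitting a non-trivial surjective endomorphism is \emph{toric}. Toric varieties are Mori dream spaces (or, even more directly, are of Calabi--Yau type via the toric boundary), so Theorem~\ref{cy mds} applies immediately and the rational case is one line. You should replace your entire third paragraph with this observation.
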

 
 \begin{proof}By the Bertini's theorem and \cite[Proposition
   2.3.1]{swzhang}, Proposition \ref{complement}  implies Corollary
   \ref{non-rational case} for non-rational surfaces. On the other
   hand, a smooth  rational projective surface admitting a non-trivial
   endomorphism is toric (see \cite[Theorem 3]{nakayama-ruled}), thus
   a Mori dream space.
 
 \end{proof}
 

  However it seems difficult to have such a classification for
  singular surfaces. The main obstruction  to this classification  for
  singular surfaces seems to be the non-liftability of endomorphisms
  to minimal resolutions. Thus we can not reduce the problem to smooth cases. 
  
 We start the proof of Theorem \ref{thm:surface}. First we show Theorem  \ref{thm:surface} for rational surfaces with rational singularities:
 
 \begin{thm}\label{thm:surface-rational-rational}Let $S$ be a normal projective rational surface with only rational singularities such that there exists a non-trivial polarized endomorphism. Then $S$ is of Calabi--Yau type.
\end{thm}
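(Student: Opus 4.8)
The plan is to show that a rational projective surface $S$ with only rational singularities admitting a non-trivial polarized endomorphism is in fact a Mori dream space, and then invoke Theorem \ref{cy mds}. The key point is that rationality plus rational singularities forces strong cohomological vanishing: by Lemma \ref{lem-rational}, if $H^i(S,\mathcal{O}_S)=0$ for $i=1,2$ then the singularities are rational, and conversely for a rational surface with rational singularities one has $H^1(S,\mathcal{O}_S)=H^2(S,\mathcal{O}_S)=0$ by pushing down the corresponding vanishing from a resolution. From the exponential sequence this gives $\operatorname{Pic}(S)\otimes\mathbb{Q}\simeq\mathrm{N}^1(S)_{\mathbb{Q}}$ and $\operatorname{Pic}(S)$ finitely generated (it is a finitely generated abelian group of rank the Picard number). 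This verifies condition (i) of Definition \ref{Mori dream space}. By Theorem \ref{singularities-char p}, $S$ is moreover $\mathbb{Q}$-Gorenstein with log canonical singularities.

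The remaining task is to check conditions (ii) and (iii) of Definition \ref{Mori dream space}, i.e. that $\operatorname{Nef}(S)$ is generated by finitely many semi-ample line bundles and that $\operatorname{Mov}(S)$ decomposes into finitely many nef chambers coming from small modifications. For this I would use Lemma \ref{non van mds}-type reasoning together with the classification of $\mathbb{Q}$-factorial (or not) rational surfaces: after taking a small $\mathbb{Q}$-factorialization if necessary, run the MMP; each step contracts a $K_S$-negative (or $(-K_S)$-positive) extremal ray, and the surface at each stage is again a rational surface with rational singularities carrying a polarized endomorphism (by Lemmas \ref{projective triviality}, \ref{induced mor-1}, \ref{induced mor-2}). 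The non-vanishing $\kappa(-K_S)\geq 0$ from Lemma \ref{non van mds}, combined with log-canonicity, allows one to produce a log Calabi--Yau boundary on the output of the MMP; alternatively, one applies the known fact that a $\mathbb{Q}$-factorial log canonical rational surface with $H^1=H^2=0$ and a polyhedral nef cone is a Mori dream space (this is automatic in dimension two once $\operatorname{Pic}$ has finite rank and the surface is log canonical, since the Mori cone of a surface with lc singularities and $-K_S$ pseudo-effective is rational polyhedral by the cone theorem and abundance holds on surfaces).

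The main obstacle I anticipate is establishing the polyhedrality of $\operatorname{Nef}(S)$ without already knowing $S$ is a Mori dream space --- in other words, feeding the endomorphism into the cone structure. Here the key input is again Lemma \ref{nz-lem}: the pullback $f^*$ on $\mathrm{N}^1(S)_{\mathbb{R}}$ has all eigenvalues of modulus $q>1$, so iterating $f^*$ is "expanding" on the nef cone; since $f^*$ preserves $\operatorname{Nef}(S)$ and $\operatorname{\overline{NE}}(S)$, a Perron--Frobenius / pigeonhole argument on the extremal rays forces the cone to have only finitely many extremal rays (otherwise one obtains infinitely many $f^*$-orbits accumulating, contradicting the expansion combined with the fact that extremal rays of the Mori cone of a log canonical surface with $-K_S$ big have bounded $(-K_S)$-degree). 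Once polyhedrality is in hand, semi-ampleness of the generators follows from the base point free theorem on the log canonical pair $(S,\Delta)$ with $K_S+\Delta\sim_{\mathbb{Q}}0$, and conditions (ii), (iii) drop out, completing the reduction to Theorem \ref{cy mds}.
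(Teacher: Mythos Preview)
Your proposal has a genuine gap: you try to prove that $S$ is \emph{always} a Mori dream space, but the argument for polyhedrality of $\operatorname{Nef}(S)$ does not go through. The decisive input you are missing is Zhang's lemma (Lemma~\ref{zhang's surface lem} in the paper), which says that when $K_S\not\equiv 0$ some iterate of $f^*$ acts as $q\cdot\mathrm{Id}$ on $\mathrm{NS}_{\mathbb{Q}}(S)$. Once $f^*=q\cdot\mathrm{Id}$, the action on rays of $\operatorname{Nef}(S)$ is trivial, so no ``Perron--Frobenius / pigeonhole'' argument on orbits of extremal rays can extract any information about the shape of the cone; the dynamics simply do not see it. Your bounded-degree remark only addresses $K_S$-negative extremal rays and only under the extra hypothesis that $-K_S$ is big, so it does not cover $\kappa(-K_S)\le 1$. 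Finally, your last step is circular: you invoke the base point free theorem on a pair $(S,\Delta)$ with $K_S+\Delta\sim_{\mathbb{Q}}0$ to get semi-ampleness of the nef generators, but the existence of such $\Delta$ is exactly what the theorem asserts.

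The paper proceeds differently. After using Zhang's lemma to assume $f^*=q\cdot\mathrm{Id}$ and deducing $\kappa(-K_S)\ge 0$ from the ramification formula, it splits into two cases. If $\kappa(-K_S)=2$, the minimal resolution $T$ has $-K_T$ big, hence $T$ is a Mori dream space by \cite{tvav}, and then $S$ is a Mori dream space as an image (\cite[1.11]{hk}, \cite{o}); Theorem~\ref{cy mds} finishes this case. If $\kappa(-K_S)\le 1$, the paper does \emph{not} show $S$ is a Mori dream space at all. Instead it takes the Zariski decomposition $-K_S\sim_{\mathbb{Q}}P+N$; since $\nu(P)\le 1$ on a surface, $P$ is semi-ample. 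From $f^*=q\cdot\mathrm{Id}$ and $\kappa(N)=0$ one gets $f^*N=qN$, so the components of $N$ are totally invariant; then \cite[Corollary~3.3]{bh} gives that $(S,N_{\mathrm{red}})$ is log canonical, and comparison with $\frac{1}{q-1}R$ forces $N\le N_{\mathrm{red}}$, so $(S,N)$ is log canonical. Semi-ampleness of $P$ then produces the log Calabi--Yau boundary directly. This Zariski-decomposition argument is the missing idea in your proposal.
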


\begin{proof}Since $S$ has rational singularities $S$, is
  $\mathbb{Q}$-factorial. Moreover we can use the following lemma due to Zhang:

\begin{lem}[{\cite[Theorem 2.7]{Zhang-uniruled}}]\label{zhang's surface lem}Let $S$ be a projective  $\mathbb{Q}$-Gorenstein surface admitting an endomorphism $f$ such that $f^*H\equiv qH$ for some $q>1$ and a big line bundle $H$. If  $K_S \not\equiv 0$,  then there exist $m \in \mathbb{N}$ and $\alpha \in \mathbb{Q}$  such that $f^{m*} = \alpha  \cdot \mathrm{Id}$ on $\mathrm{NS}_{\mathbb{Q}} (S)$.

\end{lem}
Assuming $K_S \not\equiv 0$, and since the result does not depend of
the multiple of $f$ considered, we may assume that $f^{*} = q  \cdot \mathrm{Id}$
on $\mathrm{NS}_{\mathbb{Q}} (S)$. By the same proof as in Lemma \ref{non
  van mds}, we have $\kappa(-K_S) \geq 0$. We argue case by case
according to the value of  $\kappa(-K_S)$.

\begin{case}When $\kappa(-K_S)=2$.

\end{case}
In this case, $-K_S$ is big. Take the minimal resolution  $\pi:T \to S$.  We have 
$$\pi^*K_S=K_T+E,
$$
where $E$ is an $\pi$-exceptional divisor. Thus $-K_T$ is big. By
\cite[1.11 Proposition]{hk}(cf. \cite{o} for a more general statement)
and \cite{tvav}, $S$ is a Mori dream space. Thus $S$ is of Calabi--Yau type by Theorem \ref{cy mds}.




\begin{case}When $\kappa(-K_S) \leq 1$.

\end{case}
Take the Zariski decomposition $-K_S\sim_{\mathbb{Q}}P+N$. Now $P$ is
nef and $\kappa(P)=\nu(P)=1$ or $P \sim_{\mathbb{Q}} 0$. In particular  $P$ is semi-ample. By Lemma 
\ref{zhang's surface lem} and $\kappa(N)=0$, $f^*N=qN$. Thus after replacing $f$ with a power  of $f$, we may assume that every component of $N$ is a totally invariant divisor of  $f$. Note that $N \leq \frac{1}{q-1}R$.  Let  $N_{red}$ be the reduced sum of prime divisors in $N$. 
Then $(S,N_{red})$ is log canonical  by \cite[Corollary\,3.3]{bh}. Since $N \leq \frac{1}{q-1}R$ and the components are total invariant, we have $N \leq N_{red}$.

Therefore we see that  a pair $(X, N)$ is log canonical.   Moreover since $P$ is semi-ample, we see that $S$ is of Calabi--Yau type.

\end{proof}




Finally we treat the cases of singular non-rational surfaces and singularities and give a proof of Theorem \ref{thm:surface}.
\begin{proof}[Proof of Theorem \ref{thm:surface}]By Theorem
  \ref{singularities-char p},  $K_S$ is  $\mathbb{Q}$-Cartier. We may assume that $K_S$ is not pseudo-effective by Lemma \ref{zhang's surface lem}. Indeed by the same proof as in Lemma \ref{non van mds}, we have $\kappa(-K_S) \geq 0$. Run an MMP for $S$ by \cite[Theorem 1.1]{fujino-surface}. Thus we have 
$$S\to S' \to C,$$
where $\varphi:S\to S'$ is a composition of divisorial contractions and $\pi:S' \to C$ is a Mori fiber space. By  Lemma \ref{zhang's surface lem}, we may assume that this is $f$-equivariant, i.e. there exist polarized endomorphisms $f':S'\to S'$ and   $g:C\to C$ commuting with $\varphi$ and $\pi$. Let $R$ be the ramification divisor of $f$. Then the strict transformation  $R'$of $R$ on $S'$ is the ramification divisor of $R'$. And  by Lemma \ref{zhang's surface lem}, we see that
\begin{eqnarray}\label{(1)}
\varphi^*(K_{S'}+\frac{1}{q-1}R')=K_{S}+\frac{1}{q-1}R \sim_{\mathbb{Q}}0.
\end{eqnarray}

\begin{case}When $\mathrm{dim}\,C=1$.
\end{case}
 In this case, $C$ is  a smooth rational curve or  an elliptic curve. In the case of rational curves, $S$ is a rational surface. And also we have $R^i\pi_*\mathcal{O}_{S'}= 0$ for $i>0$ by relative Kodaira vanishing theorem for log canonical surfaces (cf. \cite[Theorem 8.1]{fujino-funda}). Now since $C$ is smooth and rational, we have $H^i(S', \mathcal{O}_{S'})=0 $ for $i=1,2$ by the Leray spectral sequence. We have  $H^i(S, \mathcal{O}_{S})=0 $ for $i=1,2$ by again using the Leray spectral sequence.Then $S$ has rational singularities by Lemma
\ref{lem-rational}. On the other hand, in the case of elliptic curves, we apply  the canonical bundle formulas for a pair $(S', \frac{1}{q-1}R')$, after  replacing $f$ by its positive power. Note that  $(S', \frac{1}{q-1}R')$ is log canonical over the generic point of $\pi$ by using \cite[Theorem 5.1]{fakh}. Indeed by \cite[Theorem 5.1]{fakh} we can find some general point $y \in C $  such that $F_y:=\pi^{*}y$ is reduced and smooth, and $f$ induces an endomorphism $f_y: F_y \to F_y$. We get by  adjunction  formula 
$$K_{F_y}=f_y^*(K_{F_y})+ R'_{|F_y}
$$
as the ramification formula for $f_y$. Thus the coefficients of $\frac{1}{q-1}R'_{|F_y}$ are less than or equal to one by Remark \ref{curve}. Thus $(S', \frac{1}{q-1}R')$ is log canonical over the generic point of $\pi$.

 Let 
$$\displaystyle B=\sum_{P: \text{Weil divisor on $Y$}}(1-t_P)P,$$ 
where 
$$t_P:=\sup\{t \in \mathbb{R}| \,(X,\Delta+t\pi^*P) \text{ is sub lc
  over $P$}\}.$$ Then by \cite[Theorem 4.1.1 in the arXiv
version]{fujino-higher direct}  we have a pseudo-effective divisor $M$ such that 

$$K_{S'}+\frac{1}{q-1}R'=\pi^*(K_C+B+M).
$$ 
Note that $B$ is effective. Now since $K_{S'}+\frac{1}{q-1}R' \equiv 0$ and $K_C \sim 0$ we have $B=0 $ and $M \equiv 0$. So we see that $(S', \frac{1}{q-1}R')$ is log canonical by
the definition of $B$ and $K_S'+\frac{1}{q-1}R' \equiv 0$.  Thus we also see $(S, \frac{1}{q-1}R)$ is log Calabi--Yau pair.
 

\begin{case}When $\mathrm{dim}\,C=0$.
\end{case}
When $S$ is not rational, we see that $S'$ is isomorphic to the cone of an elliptic curve $E$ by \cite[Corollary 5.4.4]{P}. And also we may assume that $f^{-1}O=O$, where $O$ is the vertex of this cone by \cite[Lemma\,2.10]{bh}. 

This induces a polarized endomorphism of the elliptic curve $E$  by the fact that any dominant rational map of varieties induces a morphism between their Albanese varieties  (cf. \cite[Corollary 1.4]{mz1}). 
If $\varphi$ is not an isomorphism, then there exists a point $P \not =O
\in S'$ such that $f^{-1}P=P$. Thus  the endomorphism of $E$ is
ramified. But this is a contradiction.  Thus $\varphi$ is an isomorphism. Thus $S$ is isomorphic to the cone of an elliptic curve. In particular, it is of Calabi--Yau type. 

In the case of a rational surface $S$, the surface $S'$ is lc and
$-K_{S'}$ is ample. Then $H^i(S',\mathcal{O}_{S'})=0$ for $i=1,2$  (cf. \cite[Theorem 8.1]{fujino-funda}). By
the Leray spectral sequence, $H^i(S,\mathcal{O}_S)=0$ for
$i=1,2$. Then $S$ has rational singularities by Lemma
\ref{lem-rational}. We already have shown Theorem \ref{thm:surface} in the case of rational surfaces with rational singularities in Theorem \ref{thm:surface-rational-rational}. 
\end{proof}

\end{document}